\renewcommand\AA{\mathbb{A}}
\newcommand\QQ{\mathbb{Q}}
\newcommand\RR{\mathbb{R}}
\newcommand\TT{\mathcal{T}}
\newcommand\ZZ{\mathbb{Z}}
\newcommand\ZZp{\ZZ_{>0}}
\newcommand\ZZnz{\ZZ_{\ne 0}}
\newcommand\RRp{\RR_{>0}}
\newcommand\PP{\mathbb{P}}
\newcommand\xx{\mathbf{x}}
\newcommand\R{\mathcal{R}}
\newcommand\dd{\,\mathrm{d}}
\newcommand\Ptwo{{\PP^2}}
\newcommand\Pthree{{\PP^3}}
\newcommand{\sbase}[6]{\ee^{({#1},{#2},{#3},{#4},{#5},{#6})}}
\newcommand{\congr}[3]{{#1} \equiv {#2}\ (\mathrm{mod}\ {#3})}
\newcommand{\cp}[2]{{\gcd(#1,#2)=1}}
\newcommand{\ncp}[2]{{\gcd(#1,#2) > 1}}
\newcommand{\Atwo}{{\mathbf A}_2}
\newcommand{\Dfive}{{\mathbf D}_5}
\newcommand{\Esix}{{\mathbf E}_6}
\newcommand{\tS}{{\widetilde S}}
\renewcommand{\le}{\leqslant}
\renewcommand{\ge}{\geqslant}
\renewcommand{\geq}{\geqslant}
\newcommand{\ee}{\boldsymbol{\eta}}
\renewcommand{\aa}{\boldsymbol{\alpha}}
\newcommand{\bb}{\boldsymbol{\beta}}
\newcommand{\gag}{\boldsymbol{\gamma}}
\newcommand{\ex}[1]{*+<10pt>[o][F]{#1}}
\newcommand\rto{\dashrightarrow}
\newcommand\e{\eta}
\newcommand\al{\alpha}
\newcommand\phis{\phi^*}
\newcommand\Ga{\mathbb{G}_\mathrm{a}}
\newcommand\gothm{\mathfrak{m}}
\newcommand\midd{\,\Big|\,}
\renewcommand\rho{\varrho}
\renewcommand\theta{\vartheta}
\newcommand\intdist[1]{\|#1\|}
\newcommand\ve{\varepsilon}
\DeclareMathOperator{\Cox}{Cox}
\DeclareMathOperator{\vol}{vol}
\DeclareMathOperator{\Spec}{Spec}
\DeclareMathOperator{\meas}{meas}
\newtheorem*{theorem}{Theorem}
\newtheorem{lemma}{Lemma}
\theoremstyle{remark}
\newtheorem*{ack}{Acknowledgements}
\numberwithin{equation}{section}
\theoremstyle{definition}
\begin{document}

\title{Manin's conjecture for a cubic surface with $\Dfive$ singularity}

\author{T.~D. Browning}
\address{School of Mathematics\\
University of Bristol\\ Bristol\\ BS8 1TW\\ United Kingdom}
\email{t.d.browning@bristol.ac.uk}

\author{U. Derenthal} 
\address{Institut f\"ur Mathematik\\ 
Universit\"at Z\"urich\\
Winterthurerstrasse 190\\
8057 Z\"urich\\ 
Switzerland}
\email{ulrich.derenthal@math.unizh.ch}

\date{October 22, 2008}

\begin{abstract}
  The Manin conjecture is established for a split singular cubic
  surface in $\Pthree$, with
  singularity type $\Dfive$.
\end{abstract}

\subjclass{11G35 (14G05, 14G10)}

\maketitle
\tableofcontents

\section{Introduction}

Let $S \subset \Pthree$ be the cubic surface defined by 
\begin{equation}
  \label{eq:surface}
    x_3x_0^2+x_0x_2^2+x_2x_1^2 = 0.
\end{equation}
Then $S$ is a singular del Pezzo surface with a unique singularity
$(0:0:0:1)$ of type $\Dfive$ and three lines, each of which 
is defined over $\QQ$.

\begin{figure}[ht]
  \centering
  \includegraphics[width=13cm]{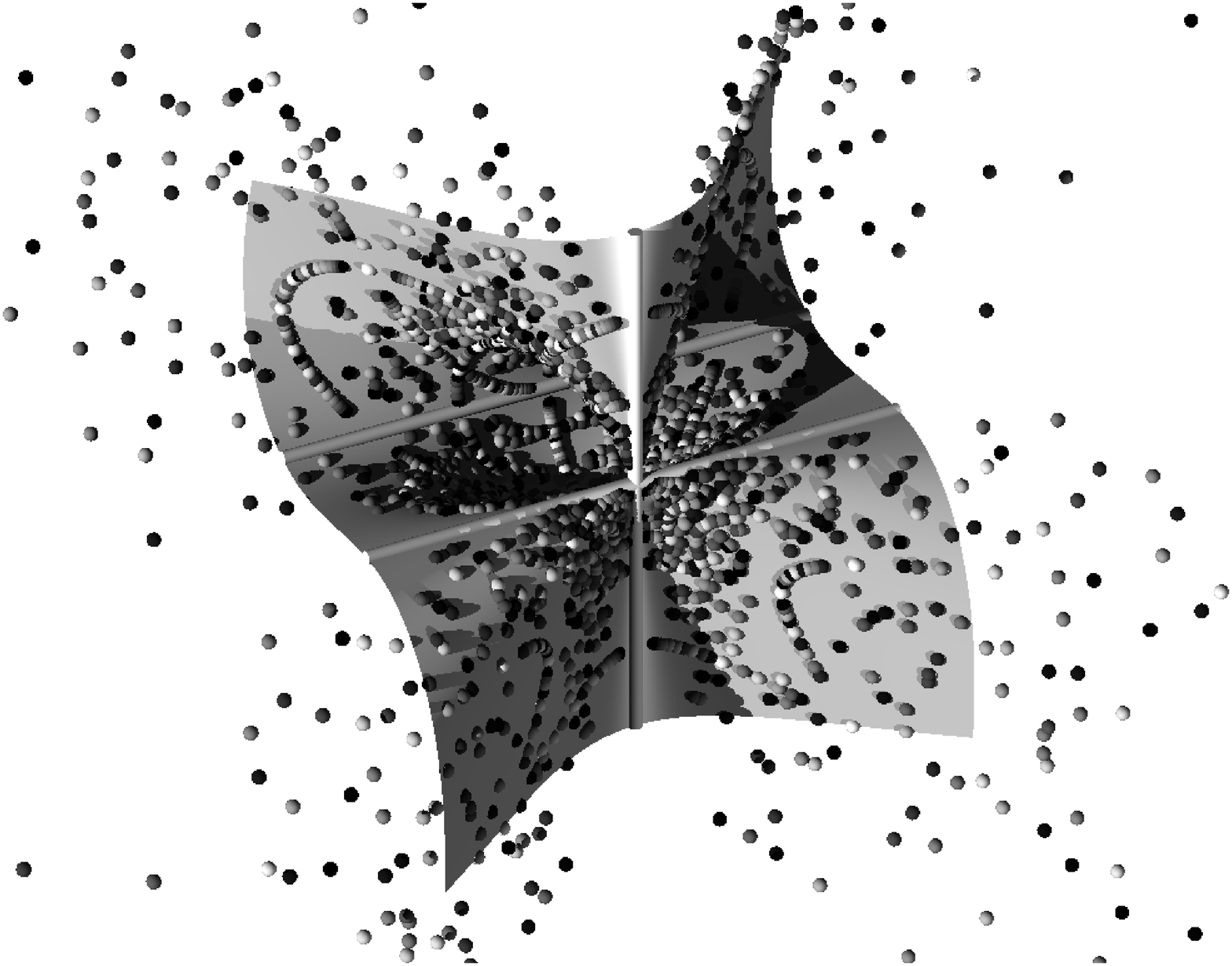}
  \caption{Points of height $\le 100$ on the $\Dfive$ cubic surface.}
  \label{fig:d5}
\end{figure}

Let $U$ be the Zariski open subset formed by deleting the lines 
from $S$. Our principal object of study in this paper is the cardinality
\[N_{U,H}(B) = \#\{\xx \in U(\QQ) \mid H(\xx) \le B\},\] for any $B\ge 1$.
Here $H$ is the usual height on $\Pthree$, in which $H(\xx)$ is defined as
$\max\{|x_0|,\dots,|x_3|\}$, provided that the point $\xx \in \Pthree(\QQ)$ is
represented by integral coordinates $(x_0,\ldots,x_3)$ that are relatively
coprime. In Figure~\ref{fig:d5} we have plotted an affine model of $S$,
together with all of the rational points of low height that it contains.  The
following is our principal result.

\begin{theorem}
  We have \[N_{U,H}(B) = c_{S,H}B(\log B)^6 + O\left(B(\log B)^5(\log \log B)\right),\] where the leading
  constant is \[c_{S,H} = \frac{1}{230400} \cdot \omega_\infty \cdot \prod_p
  \omega_p\] with
  \begin{align*}
    \omega_p &=\left(1-\frac 1 p\right)^7\left(1+\frac 7 p+\frac
    1{p^2}\right),\\
        \omega_\infty &= \int_{|x_0|,|x_1|,|x_2|,|x_0^{-2}(x_0x_2^2+x_2x_1^2)| \le 1,\
          x_2 \ge 0} x_0^{-2} \dd x_0 \dd x_1 \dd x_2.
      \end{align*}
\end{theorem}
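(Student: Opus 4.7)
The plan is to apply the universal torsor method, which has become standard for proving Manin's conjecture on singular del Pezzo surfaces. First I would pass to the minimal desingularization $\pi: \tS \to S$, whose exceptional locus over the $\Dfive$ point is a chain of five $(-2)$-curves; together with the strict transforms of the three lines on $S$, these give the negative curves on $\tS$. One computes $\mathrm{rk}\,\mathrm{Pic}(\tS) = 7$, consistent with the exponent $6$ of $\log B$ in the theorem. I would then determine the Cox ring of $\tS$ explicitly from the configuration of negative curves, realizing the universal torsor $\TT$ as an affine variety cut out in some $\AA^N$ by a single trinomial equation that mirrors the shape of~\eqref{eq:surface}.

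By the standard correspondence between rational points on $U$ and integer points on $\TT$ modulo the action of the N\'eron--Severi torus, the counting function $N_{U,H}(B)$ reduces to counting integer tuples $\ee = (\eta_1,\ldots,\eta_N)$ that solve the torsor equation, satisfy the pairwise coprimality conditions dictated by the incidence of the negative curves, and produce a point of height at most $B$ under the monomial expressions for $x_0, x_1, x_2, x_3$ in the $\eta_i$. I would then solve the torsor equation for the variable in which it is linear, converting it into a congruence between monomials in the remaining variables, and evaluate the counting function by summing over the variables one at a time, from innermost to outermost, using M\"obius inversion to dispose of the coprimality constraints and approximating each discrete sum by its corresponding integral. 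After seven such steps, a main term of order $B(\log B)^6$ should emerge and the accumulated errors should combine to $O(B(\log B)^5 \log\log B)$.

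Finally, I would verify that the resulting constant agrees with Peyre's prediction: the rational prefactor $\frac{1}{230400}$ should be identified with the intersection-theoretic invariant $\alpha(\tS)$ on the effective cone of $\tS$ (divided by any cohomological correction, which is trivial since $S$ is split), the Archimedean integral emerging from the summation should reassemble into $\omega_\infty$ after the appropriate change of variables from the monomial parametrization, and the local factors extracted from the M\"obius inversions should combine into the Euler factors $\omega_p$ at each prime.

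The principal obstacle is the iterated summation: at each stage the coprimality constraints, the torsor congruence, and the piecewise height region interact nontrivially, and the errors must be controlled sharply enough that their aggregate stays within $B(\log B)^5 \log\log B$. In particular, one inner sum -- typically the evaluation of a divisor-type average over a short interval subject to the torsor congruence -- is expected to be the source of the unavoidable $\log\log B$ loss in the error term, and sharpening it would require a substantially more delicate arithmetic input.
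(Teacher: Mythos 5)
Your outline is the right high-level template --- descend to the universal torsor, solve the linear variable of the trinomial equation via a congruence, iterate M\"obius inversion and sum-to-integral passages variable by variable, and match the resulting constant against Peyre's prediction --- and this is indeed the skeletal strategy the paper follows. But it omits the one input that the paper itself identifies as the decisive difficulty, and it misplaces the source of the error term.

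The central obstacle is not generic ``error control''. The paper stresses that the exponential-sum estimates from the analogous $\Esix$ cubic surface case (\cite{MR2332351}) are, in their existing form, \emph{not strong enough to yield any asymptotic formula} in the $\Dfive$ setting. Two sharpenings are required. First, the relevant complete exponential sum is now quadratic, $S_q(a,b)=\sum_{v=1}^q e_q(av^2+bv)$, for which Lemma~\ref{lem:lemma4} gives the clean bound $S_q(a,b)\ll\gcd(q,a)^{1/2}q^{1/2}$, sharper than the $O_\ve\bigl(\gcd(q,b)q^{1/2+\ve}\bigr)$ available for the cubic phase in \cite{MR2332351}. Second, this feeds into a reworked version of the key $\psi$-sum estimate (Lemma~\ref{lem:lemma10}), where the essential gain is that a term of size $q^{-1/3}$ is replaced by $q^{-1}$; the paper explicitly calls this saving ``crucial''. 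Without recognising that one must revisit and strengthen these estimates --- rather than merely invoking the existing machinery --- your proposal would not close.

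Two further points are miscalibrated. You attribute the $\log\log B$ loss to an inner ``divisor-type average over a short interval subject to the torsor congruence'', which points at the first summation (over $\al_1,\al_2$). In fact that step, as well as the second summation, only contribute $O(B(\log B)^5)$; the $\log\log B$ enters at the third summation, through the general averaging results of \cite{manin_auxiliary} applied to the arithmetic factor $\theta_3$. You also do not anticipate that the second and third summations must be split according to whether $|\e_8|>\e_7$ or $\e_7\ge|\e_8|$, with the order of integration reversed in the two cases, nor that a separate transition from $\R(B)$ to $\R'(B)$ is needed at the end. Here the dual of the effective cone of $\tS$ is only a \emph{difference} of two simplicial cones, so this final volume comparison is genuinely more delicate than in the simplicial cases treated earlier in the literature, and it is where the $\alpha(\tS)$ factor is ultimately extracted.
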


It is straightforward to check that the surface $S$ is neither toric nor an
equivariant compactification of $\Ga^2$. Thus this result does not follow from
the work of Tschinkel and his collaborators \cite{MR1620682, clt}.  Our
theorem confirms the conjecture of Manin \cite{MR89m:11060} since the Picard
group of the minimal desingularisation $\tS$ of the split del Pezzo surface
$S$ has rank $7$.  Furthermore, the leading constant $c_{S,H}$ coincides with
Peyre's prediction \cite{MR1340296}. To check this we begin by observing that
\[\alpha(\tS) = \frac{\alpha(S_0)}{\#W(\Dfive)} =
\frac{1/120}{1920} = \frac{1}{230400},\] by \cite[Theorem~4]{MR2318651} and
\cite[Theorem~1.3]{math.AG/0703202}, where $S_0$ is a 
split smooth cubic surface and $\#W(\Dfive)$ is the order of the Weyl group of the
root system $\Dfive$. Next one easily verifies that the constant
$\omega_\infty$ in the theorem is the real density, which is computed by
writing $x_3$ as a function of $x_0,x_1,x_2$ and using the Leray form
$x_0^{-2} \dd x_0 \dd x_1 \dd x_2$. Finally, it is straightforward to compute
the $p$-adic densities as being equal to $\omega_p$.

Our work is the latest in a sequence of attacks upon the Manin
conjecture for del Pezzo surfaces, a comprehensive survey of which can be
found in \cite{gauss}. 
A number of authors have established the conjecture for the surface 
$$
x_1x_2x_3+x_0^3=0,
$$ 
which has singularity
type $3\Atwo$.  The sharpest unconditional result available is due to 
la Bret\`eche \cite{MR2000b:11074}. 
Furthermore, in joint work with  la Bret\`eche \cite{MR2332351}, the
authors have recently resolved the conjecture for the surface
$$
x_1x_2^2+x_2x_0^2+x_3^3 = 0,
$$ 
which has singularity type $\Esix$.  Our main result signifies only the third
example of a cubic surface for which the Manin conjecture has been resolved.

The proof of the theorem draws upon the expanding store of technical machinery
that has been developed to study the growth rate of rational points on
singular del Pezzo surfaces.  In particular, we will take advantage of the
estimates involving exponential sums that featured in \cite{MR2332351}. In the
latter setting these tools were required to get an asymptotic formula for the
relevant counting function with error term of the shape $O(B^{1-\delta})$.
However, in their present form, they are not even enough to establish an
asymptotic formula in the $\Dfive$ setting.  Instead we will need to revisit
the proofs of these results in order to sharpen the estimates to an extent
that they can be used to establish the theorem.  In addition to these refined
estimates, we will often be in a position to abbreviate our argument by taking
advantage of \cite{manin_auxiliary}, where several useful auxiliary results
are framed in a more general context.

In keeping with current thinking on the arithmetic of split del Pezzo
surfaces, the proof of our theorem relies on passing to a universal
torsor, which in the present setting is an open subset of the
hypersurface 
\begin{equation}
  \label{eq:torsor}
  \eta_2\eta_6^2\alpha_2 +
  \eta_4\eta_5^2\eta_7^3\eta_8 + \eta_3\alpha_1^2=0,
\end{equation}
embedded in $\AA^{10} \cong \Spec\QQ[\e_1,\ldots,\e_8,\al_1,\al_2]$.
Furthermore, as with most proofs of the Manin conjecture for singular del
Pezzo surfaces of low degree, the shape of the cone of effective divisors of
the corresponding minimal desingularisation plays an important role in our
work.  For the surfaces treated in \cite{MR2320172}, \cite{MR2332351},
\cite{math.NT/0604193}, the fact that the effective cone is simplicial
streamlines the proofs considerably. For the surface studied in
\cite{arXiv:0710.1560}, this was not the case, but it was nonetheless possible
to exploit the fact that the dual of the effective cone is the difference of
two simplicial cones.  For the cubic surface \eqref{eq:surface},
the dual of the effective cone is again the difference of two simplicial
cones. However, we choose to ignore this fact and rely on a more
general strategy instead.

\begin{ack}
  While working on this paper the first author was supported by EPSRC grant
  number \texttt{EP/E053262/1}. The second author was partially supported by
  a Feodor Lynen Research Fellowship of the Alexander von Humboldt
  Foundation. The authors are grateful to the referee for a number of useful
  comments that have improved the exposition of this paper.
\end{ack}

\section{Arithmetic functions and exponential sums}

Define the multiplicative arithmetic functions 
\[
\phis(q)=\prod_{p\mid q}\left(1-\frac{1}{p}\right), \quad 
g(q) = \sum_{d \mid q} d^{-1/2},\quad h_k(q) = 2^{\omega(q)}g(q)^k,
\]  
for any $k \in \ZZp$, where $\omega(q)$ denotes the number of distinct prime
factors of $q$. These functions will feature quite heavily in our work and we
will need to know the average order of the latter.

\begin{lemma}\label{lem:sum_h_k}
  For any $k\in \ZZp$ we have \[\sum_{q \le Q} h_k(q) \ll_k Q \log Q.\]
\end{lemma}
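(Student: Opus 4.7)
The plan is to expand $g(q)^k$ as a $k$-fold divisor sum, swap the order of summation, and combine the result with the classical estimate $\sum_{n \le X} 2^{\omega(n)} \ll X \log X$ (which follows from the Dirichlet series identity $\sum_{n \ge 1} 2^{\omega(n)} n^{-s} = \zeta(s)^2/\zeta(2s)$). Writing
\[
g(q)^k = \sum_{d_1, \dots, d_k \mid q} \frac{1}{\sqrt{d_1 \cdots d_k}}
\]
and swapping the order of summation, the condition $d_i \mid q$ for all $i$ amounts to $D \mid q$ with $D := \mathrm{lcm}(d_1, \dots, d_k)$, so that
\[
\sum_{q \le Q} h_k(q) = \sum_{d_1, \dots, d_k} \frac{1}{\sqrt{d_1 \cdots d_k}} \sum_{\substack{q \le Q \\ D \mid q}} 2^{\omega(q)}.
\]

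I would then control the inner sum by writing $q = Dm$, applying $2^{\omega(Dm)} \le 2^{\omega(D)} 2^{\omega(m)}$, and invoking the $X \log X$-bound with $X = Q/D$ (the case $D > Q$ contributing zero). This gives a bound $O\bigl(2^{\omega(D)}(Q/D)\log Q\bigr)$ for the inner sum and reduces the lemma to the claim that
\[
\Sigma_k := \sum_{d_1, \dots, d_k \ge 1} \frac{2^{\omega(D)}}{D \sqrt{d_1 \cdots d_k}}
\]
is finite.

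Since the summand is multiplicative in the joint argument, $\Sigma_k$ factors as an Euler product $\prod_p f_k(p)$. Grouping tuples $(a_1,\dots,a_k)$ of $p$-adic valuations according to their shape, the zero tuple contributes $1$; each of the $k$ tuples with exactly one $a_i=1$ contributes $2p^{-3/2}$ (since there $\max a_i = 1$, $\sum a_i = 1$ and $\omega(D)=1$); and all further tuples together contribute $O_k(p^{-2})$. Hence $f_k(p) = 1 + 2kp^{-3/2} + O_k(p^{-2})$, so $\prod_p f_k(p)$ converges absolutely by comparison with $\sum_p p^{-3/2}$. This Euler-factor bookkeeping is the only real subtlety in the argument; with $\Sigma_k < \infty$ in hand, the claimed bound $\sum_{q \le Q} h_k(q) \ll_k Q \log Q$ follows.
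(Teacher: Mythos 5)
Your proof is correct and follows essentially the same route as the paper: expand $g(q)^k$ as a $k$-fold divisor sum, swap, write $q$ as a multiple of $[d_1,\dots,d_k]$, apply the submultiplicativity of $2^{\omega(\cdot)}$ together with $\sum_{m\le X}2^{\omega(m)}\ll X\log X$, and finish by showing the resulting constant converges via an Euler product. The only cosmetic difference is that the paper absorbs the factor $2^{\omega([d_1,\dots,d_k])}$ into $(d_1\cdots d_k)^\ve$ before checking convergence, whereas you track it explicitly in $\Sigma_k$; both computations come down to a local factor of the shape $1+O_k(p^{-3/2})$.
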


\begin{proof}
Let $k\in \ZZp$ be given and let $\ve > 0$. Then we have 
\begin{align*}
\sum_{q\le Q}h_k(q)
&=\sum_{q\le Q}2^{\omega(q)}
\sum_{d_1,\ldots,d_k\mid q}(d_1\cdots d_k)^{-1/2}\\
&\ll_\ve
\sum_{d_1,\ldots,d_k=1}^{\infty}(d_1\cdots d_k)^{\ve-1/2}\sum_{u\le Q/[d_1,\ldots,d_k]}2^{\omega(u)}\\
&\ll_\ve Q\log Q
\sum_{d_1,\ldots,d_k=1}^{\infty}\frac{(d_1\cdots d_k)^{\ve-1/2}}{[d_1,\ldots,d_k]},
\end{align*}
where $[a,b]$ denotes the least common multiple of $a,b\in \ZZp$. We
easily check  that the final sum is absolutely convergent by
considering the corresponding Euler product, which has local factors
of the shape $1+O_\ve(p^{\ve-3/2})$. 
\end{proof}

Given integers $a,b,q$, with $q>0$, we will be led to consider the
quadratic exponential sum
\begin{equation}\label{eq:exp_sum_quadratic}
  S_q(a,b)=\sum_{v=1}^q e_q(av^2+bv).
\end{equation}
Our study of this should be compared with the corresponding sum studied in
\cite[Eq. (3.1)]{MR2332351}, involving instead a cubic phase $av^3+bv^2$. In
\cite[Lemma~4]{MR2332351} an upper bound of the shape
$O_\ve(\gcd(q,b)q^{1/2+\ve})$ is established for the cubic sum. The following
result shows that we can do better in the quadratic setting.

\begin{lemma}\label{lem:lemma4}
  For any $a,b \in \ZZ$ with $\gcd(q,a,b)=1$, we have
  \begin{equation*}
    S_q(a,b) \ll \gcd(q,a)^{1/2}q^{1/2}.
  \end{equation*}
\end{lemma}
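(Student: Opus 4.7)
The plan is to first reduce to the coprime case $\gcd(q,a)=1$ by a linear change of variable, and then apply the standard squaring trick to bound the resulting quadratic Gauss sum.

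Set $d=\gcd(q,a)$ and write $q=dq'$, $a=da'$ with $\gcd(q',a')=1$. Each $v\in\{1,\dots,q\}$ decomposes uniquely as $v=w+uq'$ with $1\le w\le q'$ and $0\le u<d$. Since $aq'=a'q\equiv 0\pmod q$, the cross terms in $a(w+uq')^2$ vanish modulo $q$, so $av^2\equiv aw^2\pmod q$. Combined with $e_q(buq')=e_d(bu)$, this factorises the sum as
\begin{equation*}
S_q(a,b)=\Bigl(\sum_{w=1}^{q'}e_q(aw^2+bw)\Bigr)\sum_{u=0}^{d-1}e_d(bu).
\end{equation*}
The inner $u$-sum equals $d$ if $d\mid b$ and vanishes otherwise. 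If $d\nmid b$ we are done because $S_q(a,b)=0$. Otherwise $d\mid\gcd(q,a,b)=1$, so $d=1$, and the problem reduces to showing that $|S_q(a,b)|\ll q^{1/2}$ whenever $\gcd(q,a)=1$.

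For this remaining case I would use the squaring argument: expanding $|S_q(a,b)|^2$ and substituting $v\mapsto w+u\pmod q$ yields
\begin{equation*}
|S_q(a,b)|^2=\sum_{u=1}^{q}e_q(au^2+bu)\sum_{w=1}^{q}e_q(2auw).
\end{equation*}
The inner sum equals $q$ when $q\mid 2au$ and $0$ otherwise. Since $\gcd(q,a)=1$, the congruence $q\mid 2au$ is equivalent to $q\mid 2u$, which has at most $\gcd(q,2)\le 2$ solutions $u$ modulo $q$. Each contributes a term of modulus at most $q$, yielding $|S_q(a,b)|^2\le 2q$ and hence $|S_q(a,b)|\ll q^{1/2}=\gcd(q,a)^{1/2}q^{1/2}$, as required.

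There is no real obstacle here; the argument is a standard application of the Gauss sum machinery. The only mild subtlety is that completing the square directly would require $2a$ to be invertible modulo $q$, which fails for even $q$. The squaring trick circumvents this, at the cost of an inessential factor of $\sqrt{2}$ that is absorbed into the implicit constant.
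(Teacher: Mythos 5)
Your proof is correct and gives a slightly stronger conclusion than the lemma itself. The paper proceeds directly via the squaring trick: after writing $|S_q(a,b)|^2 = \sum_{x}e_q(-ax^2-bx)\sum_v e_q(-2avx)$, it sets $h=\gcd(q,a)$, $q=hq'$, and simply counts the $x\in\{1,\dots,q\}$ with $q'\mid 2x$ (at most $2h$ of them), bounding each summand trivially by $1$ to reach $|S_q(a,b)|^2\le 2qh$. You instead precede the squaring step with a factorisation: decomposing $v=w+uq'$ with $d=\gcd(q,a)$ splits $S_q(a,b)$ into a $w$-sum times $\sum_{u=0}^{d-1}e_d(bu)$, and since $\gcd(q,a,b)=1$ forces $d\nmid b$ whenever $d>1$, the sum vanishes identically unless $d=1$. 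You then apply the squaring argument only in the coprime case, where $q\mid 2au$ reduces to $q\mid 2u$ and the count is at most $2$. This reduction reveals that under the lemma's hypothesis the sharper bound $S_q(a,b)\ll q^{1/2}$ actually holds (with vanishing in the non-coprime case), whereas the paper settles for $\gcd(q,a)^{1/2}q^{1/2}$, which is all it needs downstream; both proofs are valid, and yours is arguably the more transparent of the two.
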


\begin{proof}
 Writing $w=v+x$ in the second step, we find that
   \begin{equation*}
     \begin{split}
       |S_q(a,b)|^2 &= \sum_{v,w=1}^q e_q(a(v^2-w^2)+b(v-w))\\
       & = \sum_{v,x=1}^q e_q(-a(2vx+x^2)-bx)\\
       & = \sum_{x=1}^q e_q(-ax^2-bx) \sum_{v=1}^q e_q(-2avx)
     \end{split}
   \end{equation*}
   The inner sum is $q$ if $q \mid 2ax$ and $0$ otherwise. Let $h=\gcd(a,q)$
   and write $q=hq'$, $a=ha'$ with $\cp{a'}{q'}$. Then
   \begin{equation*}
     |S_q(a,b)|^2 = q \sum_{\substack{x=1\\q' \mid 2x}}^q e_q(-ax^2-bx) \le 2qh,
   \end{equation*}
   and the result follows.
 \end{proof}

Our next results concern the function $\psi(t)=\{t\}-1/2$, where $\{t\}$ is
the fractional part of $t \in \RR$.  The following estimate improves upon
\cite[Lemma~5]{MR2320172}.

\begin{lemma}\label{lem:sum_mod_q}
  For any $t\in \RR$, $b \in \ZZ$, $q \in \ZZp$ with $\cp b q$, we
  have \[\sum_{\substack{x=1\\\cp x q}}^q \psi\left(\frac{t-bx^2}{q}\right)
  \ll h_1(q) \log(q+1)q^{1/2}.\]
\end{lemma}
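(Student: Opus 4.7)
I would replace $\psi$ by its truncated Fourier approximation and reduce to complete quadratic exponential sums handled by Lemma~2. For a parameter $H\ge 1$, Vaaler's lemma produces real trigonometric polynomials $\Phi^{\pm}$ of degree at most $H$ with $\Phi^{-}(y)\le\psi(y)\le\Phi^{+}(y)$, whose Fourier coefficients satisfy $|\widehat\Phi^{\pm}(0)|\ll 1/H$ and $|\widehat\Phi^{\pm}(h)|\ll 1/|h|$ for $0<|h|\le H$. Applying this with $y=(t-bx^{2})/q$ and summing over $x$ coprime to $q$ bounds the sum in question by $O(q/H)$ plus
\begin{equation*}
  \sum_{0<|h|\le H}\frac{|V(h)|}{|h|},\qquad V(h):=\sum_{\substack{x=1\\\gcd(x,q)=1}}^{q}e_q(-hbx^{2}).
\end{equation*}

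To control $V(h)$, I would open the coprimality condition by M\"obius inversion and recognise each inner sum as an instance of \eqref{eq:exp_sum_quadratic}:
\begin{equation*}
V(h)=\sum_{d\mid q}\mu(d)\,S_{q/d}(-hbd,0).
\end{equation*}
The gcd of the coefficient with the modulus equals $\gcd(hbd,q/d)=\gcd(hd,q/d)$, since $\gcd(b,q)=1$; extracting this common factor from both coefficient and modulus reduces to a sum of type \eqref{eq:exp_sum_quadratic} with coprime data, to which Lemma~2 applies, giving $|S_{q/d}(-hbd,0)|\ll\gcd(hd,q/d)^{1/2}(q/d)^{1/2}$. Writing $c=\gcd(d,q/d)$ and $m=(q/d)/c$, a prime-by-prime check shows $\gcd(d/c,m)=1$, and therefore $\gcd(hd,q/d)=c\cdot\gcd(h,m)$. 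Splitting $h\le H$ according to the value of $\gcd(h,m)$ and using $\sum_{k\mid m}k^{-1/2}=g(m)$ then yields
\begin{equation*}
\sum_{h=1}^{H}\frac{\gcd(hd,q/d)^{1/2}}{h}\ll c^{1/2}g(m)\log(H+1).
\end{equation*}

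Summing the contributions from each squarefree $d\mid q$ bounds the Fourier piece by a constant multiple of
\begin{equation*}
\log(H+1)\sum_{\substack{d\mid q\\ d\text{ squarefree}}}(q/d)^{1/2}\gcd(d,q/d)^{1/2}g\!\left(\frac{q/d}{\gcd(d,q/d)}\right).
\end{equation*}
This divisor sum is multiplicative in $q$, and a short prime-power calculation (comparing the contributions of $d=1$ and $d=p$ with $2g(p^{k})$) confirms it is $\ll 2^{\omega(q)}g(q)q^{1/2}=h_{1}(q)q^{1/2}$. Finally, choosing $H=q$ absorbs the $O(q/H)$ error into the main bound and delivers the stated estimate. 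The main obstacle is the last multiplicativity bookkeeping ensuring the divisor sum is controlled by $h_{1}(q)q^{1/2}$; the remainder of the argument is standard once Lemma~2 is in hand.
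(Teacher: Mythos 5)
Your proof is correct, and it reaches the stated bound by a genuinely different route from the paper's. The paper first performs M\"obius inversion over divisors of $\gcd(x,q)$, at the level of the $\psi$-sums, reducing the problem to the auxiliary estimate \eqref{eq:sum_psi_square} for the unrestricted sum $\sum_{0\le x<q}\psi((t-bx^2)/q)$; that estimate is then established by following the Fourier-truncation machinery from an earlier lemma, which produces the exponential sums $T(q,m,\ell')=\sum_{0\le x<q}e_{q/m}(\ell'bx^2)$, themselves controlled by residue-class decomposition plus Lemma~\ref{lem:lemma4}. You instead apply Vaaler's approximation first, directly to the coprime-restricted sum, obtaining the reduced-residue exponential sums $V(h)=\sum_{\gcd(x,q)=1}e_q(-hbx^2)$, and only then strip the coprimality condition by M\"obius, so that Lemma~\ref{lem:lemma4} is applied to the resulting complete quadratic Gauss sums $S_{q/d}(-hbd,0)$. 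In effect the two arguments interchange the order of M\"obius inversion and the Fourier truncation: the paper does M\"obius outside and Fourier inside, you do Fourier outside and M\"obius inside. The paper's ordering allows it to cite an existing result and splits the gcd bookkeeping across two stages; your ordering is self-contained and concentrates all the arithmetic in a single multiplicative divisor-sum estimate, which you correctly reduce to a check at prime powers (for $q=p^a$ the sum over $d\in\{1,p\}$ gives $p^{a/2}(g(p^a)+g(p^{a-2}))$ for $a\ge 2$, resp.\ $p^{1/2}+2$ for $a=1$, both of which are $\le 2g(p^a)p^{a/2}$). Both proofs ultimately hinge on Lemma~\ref{lem:lemma4} applied with vanishing linear coefficient, which is the essential input giving the $q^{1/2}$ saving.
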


\begin{proof}
  Let $S(q)$ denote the sum that is to be estimated. By M\"obius inversion it
  follows that
  \begin{equation*}
    \begin{split}
      S(q)&= \sum_{n \mid q} \mu(n) \sum_{0 \le x' <q/n}
      \psi\left(\frac{t/n-bnx'^2}{q/n}\right)\\
      &= \sum_{\substack{n \mid q\\m=\gcd(n,q/n)}} \mu(n)m \sum_{0 \le x' <
            \frac{q}{mn}} \psi\left(\frac{t/(mn) - bnx'^2/m}{q/(mn)}\right).
    \end{split}
  \end{equation*}
  We claim that
  \begin{equation}\label{eq:sum_psi_square}
    \sum_{0 \le x < q} \psi\left(\frac{t-bx^2}{q}\right) \ll 
    g(q) \log(q+1)q^{1/2},
  \end{equation}
for any $t\in \RR$, $b \in \ZZ$ and $q \in \ZZp$ with $\cp b q$. Under
this assumption, it therefore follows that 
  \begin{equation*}
    \begin{split}
S(q)  &\ll \sum_{\substack{n \mid q\\m=\gcd(n,q/n)}}|\mu(n)|m g(q) \log(q+1)
      (q/(mn))^{1/2}\\
      &= g(q) \log(q+1) q^{1/2} \sum_{n\mid q}
      \frac{|\mu(n)|\gcd(n,q/n)^{1/2}}{n^{1/2}}\\
      &\ll 2^{\omega(q)}g(q)\log(q+1) q^{1/2}.
    \end{split}
  \end{equation*}
  This is satisfactory for the lemma, since $h_1(q) = 2^{\omega(q)}g(q)$.

  To establish \eqref{eq:sum_psi_square} we follow the proof of \cite[Lemma~4]{MR2320172}, finding that
  \begin{equation*}
    \sum_{0 \le x < q} \psi\left(\frac{t-bx^2}{q}\right) \ll 1 +
    \sum_{m\mid q} \sum_{\substack{1 \le \ell' < q/m\\\cp{\ell'}{q/m}}}
    \frac{|T(q,m,\ell')|}{q \intdist{\ell'm/q}},
  \end{equation*}
  where
  \begin{equation*}
    T(q,m,\ell') = \sum_{0 \le x < q} e_{q/m}(\ell'bx^2).
  \end{equation*}
  Rather than applying Weyl's inequality as in \cite[Lemma~4]{MR2320172}, we
  simply break into $m$ residue classes modulo $q/m$ and apply Lemma
  \ref{lem:lemma4} to deduce that
  \begin{equation*}                         
    T(q,m,\ell') \ll m(q/m)^{1/2} = (mq)^{1/2}.
  \end{equation*}

  Now
  \begin{equation*}
    \sum_{1 \le \ell' < r} \intdist{\ell'/r}^{-1} \ll r\sum_{1 \le
      \ell'<r}\ell'^{-1} \ll r \log(r+1),
  \end{equation*}
  for any $r \in \ZZp$. Hence
  \begin{equation*}
    \begin{split}
 \sum_{0 \le x < q} \psi\left(\frac{t-bx^2}{q}\right)
      &\ll 1 + \log(q+1)q^{1/2} \sum_{m \mid q} m^{-1/2}\\
      &\ll g(q)\log(q+1)q^{1/2},
    \end{split}
  \end{equation*}
which thereby concludes the proof of \eqref{eq:sum_psi_square}.
\end{proof}

For positive integers $a,b$, we define the function
\begin{equation}\label{eq:def_f}
 f_{a,b}(n) =
 \begin{cases}
  \phis(n)/\phis(\gcd(n,a)), & \mbox{if $\cp n b$},\\
  0, & \mbox{if $\ncp n b$}.
 \end{cases}
\end{equation}
We combine Lemma~\ref{lem:sum_mod_q} with the proof of
\cite[Lemma~1]{arXiv:0710.1560} to obtain the following result. 

 \begin{lemma}\label{lem:sum_mod_q_2}
   Let $0 \le t_1 < t_2$ and $\cp \alpha q$. We have
   \begin{multline*}
     \sum_{\substack{1 \le \rho \le q\\\cp{\rho}{q}}} \sum_{\substack{t_1 < n
         \le t_2\\\congr n {\alpha\rho^2} q}} f_{a,b}(n) =
     (t_2-t_1)\cdot\phis(bq) \prod_{p\nmid abq}\left(1-\frac{1}{p^2}\right)\\
     + O\left(2^{\omega(b)}\log(t_2+2) h_1(q) \log(q+1)q^{1/2}\right).
   \end{multline*}
 \end{lemma}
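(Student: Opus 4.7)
The plan is to expand $f_{a,b}(n)$ via M\"obius inversion, exchange the order of summation, and control the resulting $\psi$-error by appealing to Lemma~\ref{lem:sum_mod_q}.

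First I would use $\phis(n)/\phis(\gcd(n,a)) = \sum_{d\mid n,\,(d,a)=1}\mu(d)/d$ together with the M\"obius detection $\mathbf{1}_{\gcd(n,b)=1} = \sum_{e\mid\gcd(n,b)}\mu(e)$ to obtain
\[
  f_{a,b}(n) = \sum_{\substack{d,\,e\\ (d,a)=1,\ e\mid b\\ [d,e]\mid n}} \frac{\mu(d)\mu(e)}{d}.
\]
Interchanging summations reduces the task, for each admissible pair $(d,e)$, to counting $n \in (t_1,t_2]$ satisfying $n\equiv \alpha\rho^2\pmod q$ and $[d,e]\mid n$. Setting $k=[d,e]$, this count vanishes unless $(k,q)=1$; in the remaining case the Chinese remainder theorem combines the two congruences into a single one modulo $kq$ with residue $c_{d,e,\rho} = km_\rho$, where $m_\rho \equiv \alpha\overline{k}\rho^2 \pmod q$ and $\overline{k}$ denotes the inverse of $k$ modulo $q$. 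The identity $\lfloor x\rfloor = x - 1/2 - \psi(x)$ then decomposes the count as $(t_2-t_1)/(kq)$ plus two $\psi$-terms.

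For the main term, summing over $\rho$ produces a factor $\phi(q)$, and the sum over $(d,e)$ subject to $(d,a)=1$, $e\mid b$, $([d,e],q)=1$ factors as an Euler product. A short prime-by-prime calculation shows its local factor equals $1$ when $p\mid q$ or when $p\mid a$ with $p\nmid bq$, equals $1-1/p$ when $p\mid b$ with $p\nmid q$, and equals $1-1/p^2$ when $p\nmid abq$; multiplying by $\phis(q)=\phi(q)/q$ absorbs the $p\mid q$ factors and combines with the $p\mid b$ ones to give $\prod_{p\mid bq}(1-1/p)=\phis(bq)$, which matches the claimed main term. For the error, the explicit form of $c_{d,e,\rho}$ yields
\[
  \psi\!\left(\frac{t-c_{d,e,\rho}}{kq}\right) = \psi\!\left(\frac{t/k - \alpha\overline{k}\rho^2}{q}\right),
\]
and Lemma~\ref{lem:sum_mod_q}, applied with the integer $\alpha\overline{k}$ (coprime to $q$), bounds the $\rho$-sum by $O(h_1(q)\log(q+1)q^{1/2})$ for each $(d,e)$. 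The divisibility $[d,e]\mid n\le t_2$ truncates to $d\le t_2$; together with $\sum_{e\mid b}\mu(e)^2 = 2^{\omega(b)}$ and $\sum_{d\le t_2}\mu(d)^2/d \ll \log(t_2+2)$, this delivers the stated error bound.

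The main technical nuisance will be the bookkeeping between the truncated $(d,e)$-sum needed for the error and the unrestricted Euler product needed for the main term: extending the latter to all $(d,e)$ introduces a tail with $[d,e]>t_2$, which has to be bounded separately by $O(2^{\omega(b)})$ using $1/(d[d,e])\le 1/d^2$ and $(t_2-t_1)\phis(q)\sum_{d>t_2}d^{-2}\ll 1$, so that it can be absorbed into the error without loss.
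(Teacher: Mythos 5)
Your proof is correct and follows essentially the same route as the paper's: a M\"obius expansion of $f_{a,b}$, an interchange reducing the count to integers in an arithmetic progression, an application of Lemma~\ref{lem:sum_mod_q} to the resulting $\psi$-sums over $\rho$, a tail bound of size $O(2^{\omega(b)})$ to pass to the unrestricted Euler product, and a local-factor check against $\phis(bq)\prod_{p\nmid abq}(1-1/p^2)$. The only difference is cosmetic bookkeeping: the paper writes $f_{a,b}=(f_{a,b}*\mu)*1$, sets $n=dm$ and imports an exact expression for the inner count from earlier work, whereas you expand $f_{a,b}$ as a double sum over $(d,e)$ with $(d,a)=1$, $e\mid b$ and combine the two congruences by CRT — an equivalent parametrisation yielding the same main and error terms.
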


 \begin{proof}
   In the proof of
   \cite[Lemma~1]{arXiv:0710.1560}, \[\Sigma=\sum_{\substack{d^{-1}t_1 < m \le
       d^{-1}t_2\\\congr{md}{\alpha\rho^2} q}} 1\] is estimated as
   $(t_2-t_1)/(dq)+O(1)$, for given $d$ coprime to $q$. Using
   \cite[Lemma~7]{MR2332351}, we make this precise
   as \[\Sigma=\frac{t_2-t_1}{dq}+\psi\left(\frac{d^{-1}t_1-\overline d \alpha
       \rho^2}{q}\right) - \psi\left(\frac{d^{-1}t_2-\overline d
       \alpha\rho^2}{q}\right),\] where $\overline d$ is chosen such that
   $\congr{d \overline d} 1 q$. Our task is to compute \[\sum_{\substack{1 \le \rho
       \le q\\\cp \rho q}} \sum_{\substack{1 \le d \le t_2\\\cp d q}} (f_{a,b}
   * \mu)(d) \Sigma.\]

   For the main term, we may extend the summation over $d$ to all positive
   integers, since \[\sum_{1 \le \rho \le q} \frac{t_2-t_1}{q}
   \sum_{\substack{d > t_2\\\cp d q}} \frac{(f_{a,b}*\mu)(d)}{d} \ll t_2
   \sum_{d > t_2} \frac{\gcd(b,d)|\mu(d)|}{d^2} \ll 2^{\omega(b)}.\] As in
   \cite[Lemma~1]{arXiv:0710.1560}, we see that the sum over $d\in \ZZp$ is
   $c_0(t_2-t_1)/q$, with \[c_0 = \prod_{\substack{p\mid b\\p \nmid q}}
   \left(1-\frac{1}{p}\right) \prod_{p\nmid abq}\left(1-\frac{1}{p^2}\right).
   \] Summing this over $\rho$, we get $c_0
   \phis(q) (t_2-t_1)$. It is easy to see that $c_0\phis(q)$ agrees with the
   leading constant in the statement of the lemma.

   For the error term, we exchange the summations over $d$ and $\rho$. Applying
   Lemma~\ref{lem:sum_mod_q}, we obtain the contribution \[
   \begin{split}
     &\ll F(q)\sum_{\substack{1 \le d \le t_2\\\cp d q}}
     |(f_{a,b}*\mu)(d)| \ll 2^{\omega(b)}\log(t_2+2)F(q),
   \end{split}
   \]
   with $F(q)=h_1(q) \log(q+1)q^{1/2}$.
   This completes the proof of the lemma.
 \end{proof}

Given $b,c,q \in \ZZ$ such that $q>0$ and a real-valued function $f$
defined on an interval $I\subset \RR$, let 
\[
S_I(f,q)=\sum_{x \in \ZZ \cap I}
\sum_{\substack{y=1\\\congr{y^2}{bx}{q}\\\cp y q}}^q
\psi\left(\frac{f(x)-cy}q\right).
\] 
It is interesting to compare this sum with the sort of sums that
featured in our corresponding investigation of the $\Esix$ cubic surface. 
The sole difference between \cite[Eq. (4.1)]{MR2332351} and $S_I(f,q)$
is that the argument involves $(f(x)-cxy)/q$, rather than $(f(x)-cy)/q$.

We will be interested in studying $S_I(f,q)$ when 
$f \in C^1(I;\lambda_0)$. Here, if $I=[t_1,t_2]$ and $\lambda_0\ge 1$,
then $C^1(I;\lambda_0)$ is defined to be the set of real-valued
differentiable functions $f$, such that $f'$ is monotonic
  and of constant sign on $(t_1,t_2)$, with $|f(t_2)-f(t_1)|+1 \le
  \lambda_0$.
It will be convenient to define 
\[
\gothm(I)=\meas(I)+2.
\] 
We will need a version of \cite[Lemma~10]{MR2332351}, in which the
factor $q^\ve \gothm(I)^\ve$
is made more explicit. This is achieved in the following result.

\begin{lemma}\label{lem:lemma10}
  Let $X = q \gothm(I)$. Assume
  $\cp{bc}q$ and $f \in C^1(I;\lambda_0)$. For any $\ve > 0$, we have
  $$
  S_I(f,q) \ll \left(h_2(q) q^{1/2} +\tau(q)^2\frac{\gothm(I)}{q}
    +\frac{h_1(q)}{\log X}
    \frac{\lambda_0^{1/2}\gothm(I)^{1/2}}{q^{1/4}}\right)(\log X)^2,
  $$
  where $\tau(n)=\sum_{d|n} 1$ is the divisor function.
\end{lemma}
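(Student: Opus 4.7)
The plan is to revisit the proof of \cite[Lemma~10]{MR2332351}, feeding in the sharper quadratic Gauss sum estimate from Lemma~\ref{lem:lemma4} in place of the cubic analogue used there, and tracking the arithmetic weights explicitly so as to replace the generic $q^\ve \gothm(I)^\ve$ losses by the divisor-type factors $h_1(q)$, $h_2(q)$ and $\tau(q)$ together with powers of $\log X$.

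First I would detach $\psi$ from its argument via the truncated Fourier expansion of Vaaler, introducing a parameter $K\ge 1$ to be optimized at the end. This writes $S_I(f,q)$ as a linear combination of the exponential sums
\[
M_k = \sum_{x \in \ZZ \cap I} \sum_{\substack{y=1\\ \cp y q\\ y^2 \equiv bx\,(q)}}^q e_q\bigl(k(f(x)-cy)\bigr),\qquad 1\le |k|\le K,
\]
weighted by coefficients of size $O(1/|k|)$, together with a remainder controlled by $K^{-1}\sum_{|k|\le K}|M_k|$. I would then detect the congruence $x\equiv\overline{b}y^2 \pmod q$ by additive characters, obtaining
\[
M_k = \frac{1}{q}\sum_{m=0}^{q-1} B_k(m)\,G_k(m),
\]
with $B_k(m)=\sum_{x\in \ZZ\cap I} e_q(mx+kf(x))$ and $G_k(m)=\sum_{y=1,\,\cp y q}^q e_q(-m\overline{b}y^2-kcy)$.

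The Gauss-type sum $G_k(m)$ is then bounded by Möbius inversion on the coprimality of $y$ with $q$ (contributing a divisor factor) followed by Lemma~\ref{lem:lemma4}, yielding $|G_k(m)|\ll \tau(q)\gcd(q,m)^{1/2}q^{1/2}$ in the generic case $\cp{kc}{q/\gcd(q,m)}$, with a trivial $\ll q$ estimate otherwise. The sum $B_k(m)$ is controlled by partial summation using the monotonicity of $f'$ supplied by $f\in C^1(I;\lambda_0)$, together with the Kuzmin--Landau first derivative test, giving a bound of shape $\min(\gothm(I),q\|m/q\|^{-1})$ tempered by an additional correction depending on $k\lambda_0/q$ that reflects the oscillation contributed by $f$.

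Assembling these ingredients, the contribution of $m=0$, once weighted by $1/|k|$ and summed over $k$, yields the main diagonal piece of size $h_2(q)q^{1/2}(\log X)^2$; the factor $h_2(q)=2^{\omega(q)}g(q)^2$ emerges from the convolution of the divisor sum $\sum_{d\mid q}d^{-1/2}$ appearing in the Ramanujan-sum estimate for $G_k(0)$ with an analogous divisor sum arising from partial summation on $B_k(0)$. The off-diagonal terms $m\ne 0$ split into a trivial contribution of size $\tau(q)^2\gothm(I)/q$ and, after balancing $K$ roughly as $K\asymp (q\gothm(I)/\lambda_0)^{1/2}(\log X)^{-1}$, the final piece $h_1(q)\lambda_0^{1/2}\gothm(I)^{1/2}q^{-1/4}(\log X)^{-1}$. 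The principal technical obstacle will be the delicate bookkeeping of Möbius and divisor weights so that the $\tau(q)$-losses from coprimality, the divisor sum $\sum_{m\mid q}\gcd(q,m)^{1/2}m^{-1/2}=g(q)$, and their cross-terms combine precisely into the advertised weights $h_k(q)=2^{\omega(q)}g(q)^k$, rather than into weaker $q^\ve$-type losses.
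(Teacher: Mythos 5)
Your plan is essentially the same as the paper's: Vaaler's truncated Fourier expansion with a cutoff $K$ (the paper writes $H$), detection of the congruence $x\equiv\overline{b}y^2\ (\mathrm{mod}\ q)$ by additive characters, a split of the resulting complete exponential sum into a quadratic Gauss sum (bounded via M\"obius plus Lemma~\ref{lem:lemma4}) times an incomplete exponential sum in $x$ (bounded by the first derivative test), and finally optimization of $K$. So the route matches.

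Two details in your write-up are, however, the wrong way around or too coarse, and would need repair to actually land the stated bound. First, you attribute the term $h_2(q)q^{1/2}(\log X)^2$ to the frequency $m=0$ and the term $\tau(q)^2\gothm(I)/q$ to $m\neq 0$. This is backwards: at $m=0$ the sum $G_k(0)$ degenerates to a Ramanujan sum (after the paper's reduction it becomes exactly $\mu(q')$), and $B_k(0)$ is only bounded trivially by $\gothm(I)$, so $m=0$ produces the $\tau(q)^2\gothm(I)/q$ piece; it is the range $m\neq 0$, where Lemma~\ref{lem:lemma4} applies with $A_I(q';-k',h',f)\ll (q'/|k'|)(1+h\lambda_0/q)$ from the first derivative test, that contributes $h_2(q)\log(q+1)q^{1/2}$. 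Second, your handling of the non-coprime case ``$\ncp{kc}{q/\gcd(q,m)}$'' by a trivial $\ll q$ bound is not sufficient: there are order $q/p$ bad values of $m$ for each prime $p\mid\gcd(q,kc)$, and plugging the trivial bound into the $m$-sum reintroduces a factor of roughly $q$, which destroys the $q^{1/2}$ savings. The paper avoids this by first extracting $d=\gcd(h,q)$ at the level of $T_I(f,q;h)$, writing $h=dh'$, $k=dk'$, $q=dq'$ with $\gcd(k',h',q')=1$, and showing $B(dq';dh',dk')=f(d,q')B(q';h',k')$ with $f(d,q')\le d$, so that Lemma~\ref{lem:lemma4} is only ever invoked in the primitive situation; the divisor sum over $d\mid\gcd(h,q)$ is then what produces one of the two $g(q)$-factors in $h_2(q)=2^{\omega(q)}g(q)^2$. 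With these two corrections your outline agrees with the paper's proof.
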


In comparing this with \cite[Lemma~10]{MR2332351}, one sees that 
the first and third term in both results share the same approximate order of
magnitude. However, the middle term is improved from $1/q^{1/3}$ to
$1/q$. This saving is crucial in our work. It arises from the 
fact that the current set-up leads us to estimate the quadratic exponential
sums \eqref{eq:exp_sum_quadratic} with $a=0$, rather than the
corresponding cubic sums with phase $av^3+bv^2$ and $b=0$. In the
former case we are dealing with linear exponential sums, for which we
have very good control, and in the  latter case 
we only have the bound $O_\ve (q^{2/3+\ve})$ available.

\begin{proof}[Proof of Lemma \ref{lem:lemma10}]
  Let $\eta(\al;q)=\#\{1 \le n \le q \mid \congr{n^2}{\al}{q}\}$. Replacing
  the bound $\eta(\al;q) \ll_\ve q^\ve$ by $\eta(\al;q) \le
  2^{\omega(q)+1}$ in the application of Vaaler's
  trigonometric formula in the proof of \cite[Lemma~10]{MR2332351}, we obtain \[S_I(f,q) \ll
  \frac{2^{\omega(q)} \gothm(I)}{H} + \sum_{h=1}^H \frac 1 h |T_I(f,q;h)|\]
  for any $H \ge 1$, where
  \begin{equation*}
    T_I(f,q;h) = \sum_{x \in \ZZ \cap I}
    \sum_{\substack{y=1\\\congr{y^2}{bx}{q}\\\cp y q}}^q e_q(hf(x)-chy).
  \end{equation*}
  As in \cite[Lemma~10]{MR2332351}, we rewrite this as
  \begin{equation*}
    T_I(f,q;h) = \frac 1 q \sum_{k=1}^q A_I(q;-k,h,f)B(q;h,k),
  \end{equation*}
  with
  \begin{equation*}
    A_I(q;-k,h,f) = \sum_{x \in \ZZ \cap I} e_q(-kx+hf(x))
  \end{equation*}
  and
  \begin{equation*}
    B(q;h,k) = \sum_{u=1}^q
    \sum_{\substack{v=1\\\congr{v^2}{bu}{q}\\\cp v q}}^q e_q(ku-chv) =
    \sum_{\substack{v=1\\\cp v q}}^q e_q(\overline b k v^2 - chv),
  \end{equation*}
  where $\overline b$ is the multiplicative inverse of $b$ modulo $q$.
  Since $\gcd(q,\overline b k, ch) = \gcd(q,k,h)$, we have (with $h=dh',\
  k=dk',\ q=dq'$)
  \begin{equation*}
    T_I(f,q;h) =
    \sum_{d|h,q} \frac{1}{dq'} \sum_{\substack{-q'/2 < k'\le
        q'/2\\\gcd(k',h',q')=1}} A_I(q';-k',h',f)B(dq';dh',dk').
  \end{equation*}
  
  Write each $v$ modulo $q$ uniquely as $v=y+q'z$ with $1 \le y \le q'$ and $1
  \le z \le d$. Then
  \begin{equation*}
    B(dq';dh',dk') = \sum_{y=1}^{q'}\sum_{\substack{z=1\\\cp{y+q'z}{dq'}}}^d
    e_{q'}(\overline b k'y^2-ch'y)=f(d,q')B(q';h',k')
  \end{equation*}
  with $f(d,q') \le d$, just as in \cite[Lemma~10]{MR2332351}.  Therefore,
  \begin{equation*}
    T_I(f,q;h) \ll \sum_{d \mid h,q} \frac{1}{q'} \sum_{\substack{-q'/2 < k'
        \le q'/2\\\gcd(k',h',q')=1}} |A_I(q';-k',h',f)||B(q';h',k')|.
  \end{equation*}

  For the contribution from the case $k'=0$, note that $\cp{h'}{q'}$. We have
  $A_I(q';0,h',f) \ll \gothm(I)$  trivially, and
  \begin{equation*}
    B(q';h',0) = \sum_{\substack{v=1\\\cp v{q'}}}^{q'} e_{q'}(-ch'v) =
    \sum_{d\mid q'} \mu(d) \sum_{v=1}^{q'/d} e_{q'/d}(-ch'v).
  \end{equation*}
  The inner sum is $q'/d$ if $(q'/d) \mid ch'$ (which is possible only in the
  case $q'/d=1$ since $\gcd(q,c)=\cp {q'}{h'}$) and $0$ otherwise. Thus
  $B(q';h',0) = \mu(q')$, whence the total contribution to $T_I(f,q;h)$ from
  the case $k'=0$ is
  \begin{equation*}
    \ll \sum_{d\mid h,q} \frac 1{q'} \gothm(I) |\mu(q')| \ll
    \frac{\gothm(I)}{q} \sigma(\gcd(h,q)),
  \end{equation*}
  where $\sigma(n) = \sum_{d\mid n} d$ is the sum of divisors function.
  
  For the total contribution to $T_I(f,q;h)$ from the case $k' \ne 0$, we note
  that
  \begin{equation*}
    A_I(q';-k',h',f) \ll \frac{1}{|k'|}(q'+h'\lambda_0) =
    \frac{q'}{|k'|}(1+h\lambda_0/q),
  \end{equation*}
  by \cite[Lemma~5]{MR2332351} for $f \in C^1(I;\lambda_0)$. Also
  \begin{equation*}
    B(q';h',k') = \sum_{\substack{v=1\\\cp{v}{q'}}}^{q'} e_{q'}(\overline b k'
    v^2-ch'v) = \sum_{e\mid q'} \mu(e) \sum_{v=1}^{q''} e_{q''}(\overline b
    k'ev^2 - ch'v),
  \end{equation*}
  where $q'=eq''$. By Lemma~\ref{lem:lemma4},
  \begin{equation*}
    \begin{split}
      |B(q';h',k')| &\ll \sum_{e\mid q'} |\mu(e)| q''^{1/2} \gcd(q'',\overline
      bk'e)^{1/2}\\
      &\le \sum_{e\mid q'} |\mu(e)|\frac{q'^{1/2}}{e^{1/2}} \gcd(q',k')^{1/2}
      \gcd(q'',e)^{1/2}\\
      &\le 2^{\omega(q')} \gcd(q',k')^{1/2} q'^{1/2}.
    \end{split}
  \end{equation*}
  The contribution from the case $k'\ne 0$ is therefore
  \begin{equation*}
    \begin{split}
      &\ll \sum_{d\mid h,q} \frac d q \sum_{\substack{1 \le k' \le
          q'/2\\\gcd(k',h',q')=1}} \frac{q'}{k'} (1+h\lambda_0/q)
      2^{\omega(q')} \gcd(q',k')^{1/2} q'^{1/2}\\
      &\ll (1+h\lambda_0/q) 2^{\omega(q)} q^{1/2} \sum_{d \mid h,q}
      \frac{1}{d^{1/2}}\sum_{k' \le q'/2} \frac{\gcd(q',k')^{1/2}}{k'}\\
      &\ll (1+h\lambda_0/q) h_2(q) \log(q+1) q^{1/2}.
    \end{split}
  \end{equation*}

  Plugging the contribution from $k' = 0$ and $k' \ne 0$ to $T_I(f,q;h)$ into
  $S_I(f,q)$, we deduce, for any $H \ge 1$, that $S_I(f,q)$ is
  $$
     \ll{} \frac{2^{\omega(q)} \gothm(I)}{H}
      + \sum_{h=1}^H \frac 1 h \left( (1+h\lambda_0/q) h_2(q)
        \log(q+1) q^{1/2} + \frac{\gothm(I)}{q} 
        \sigma(\gcd(h,q)) \right).
  $$
Observing that
  \begin{equation*}
    \sum_{h \le H} \frac{\sigma(\gcd(h,q))}{h} = \sum_{d \mid q} \sigma(d)
    \sum_{\substack{h \le H \\ d \mid h}} \frac{1}{h} \ll (\log H) \sum_{d \mid q}
    \frac{\sigma(d)}{d} \ll (\log H) \tau(q)^2, 
  \end{equation*}
we therefore deduce that
\begin{multline*}
S_I(f,q) \ll \frac{2^{\omega(q)} \gothm(I)}{H} + \frac{\tau(q)^2 \gothm(I)
        (\log H)}{q}+ \log(q+1)h_2(q) q^{1/2} (\log H)\\
      + \log(q+1)h_2(q) \lambda_0 H/q^{1/2}.
    \end{multline*}

  Let
  \begin{equation*}
    H = \frac{q^{1/4} \gothm(I)^{1/2}}{\lambda_0^{1/2}\log(q+1)^{1/2}g(q)}.
  \end{equation*}
  If $H \ge 1$, we may use this $H$ in the estimate above, together with $q+1
  \le X$, in order to obtain the lemma.  If $H < 1$, so that $q^{1/4}
  \gothm(I)^{1/2} < \lambda_0^{1/2}(\log q)^{1/2} g(q)$, we deduce from the
  trivial estimate $S_I(f,q) \ll 2^{\omega(q)} \gothm(I)$ that the lemma holds
  in this case too.
\end{proof}

\section{The universal torsor}

Let $S$ be the $\Dfive$ cubic surface \eqref{eq:surface}, let
$U\subset S$ be the open subset formed by deleting the lines from
$S$ and let $\tS$ be the minimal desingularisation of $S$.
In this section we will establish an explicit bijection between $U(\QQ)$
and the integral points on the universal torsor above $\tS$,
subject to a number of coprimality conditions.
For this we will follow the strategy explained in \cite{math.NT/0604193}.

To establish the bijection we will introduce new variables
$\eta_1,\ldots,\eta_8$ and $\al_1,\al_2$. It will be convenient to henceforth
write
$$
\ee=(\e_1,\dots,\e_6),\quad 
\ee'=(\e_1,\dots,\e_8),
\quad \aa =
(\al_1,\al_2)
$$
and 
$$
\sbase{k_1}{k_2}{k_3}{k_4}{k_5}{k_6} = \prod_{i=1}^6 \e_i^{k_i},  
$$
for any $(k_1,\dots,k_6) \in \QQ^6$.

Let us recall some information concerning the geometry of $S$ from
\cite[Section~8]{math.AG/0604194}.
Blowing up the singularity $(0:0:0:1)$ on $S$ results in the exceptional
divisors $E_1, \dots, E_5$ in a $\Dfive$-configuration on the minimal
desingularisation $\pi: \tS \to S$. Let $E_6, E_7, E_8$ resp.\ $A_1, A_2$ on
$\tS$ be the strict transforms under $\pi$ of the three lines
$E_6''=\{x_0=x_1=0\}$, $E_7''=\{x_0=x_2=0\}$, $E_8''=\{x_2=x_3=0\}$ resp.\ the
curves $A_1''=\{x_1=x_0x_3+x_2^2=0\}$ and $A_2'' = \{x_3=x_0x_2+x_1^2=0\}$ on
$S$. The  extended
Dynkin diagram in Figure~\ref{fig:dynkin} is the dual graph of the
configuration of the curves $E_1, \dots, E_8, A_1, A_2$ on $\tS$.

\begin{figure}[ht]
  \centering
  \[\xymatrix{A_2 \ar@{-}[rr] \ar@{-}[dr] \ar@{-}[dd] & & E_6  \ar@{-}[rr]& & \ex{E_2} \ar@{-}[dr]\\
      & E_8 \ar@{-}[r] & E_7 \ar@{-}[r] & \ex{E_5} \ar@{-}[r] & \ex{E_4} \ar@{-}[r] & \ex{E_1}\\
      A_1 \ar@{-}[rrrr] \ar@{-}[ur] & & & & \ex{E_3} \ar@{-}[ur]}\]
  \caption{Configuration of curves on $\tS$.}
  \label{fig:dynkin}
\end{figure}

By \cite[Section~8]{math.AG/0604194}, non-zero global sections $\e_1, \dots,
\e_8, \al_1, \al_2$ corresponding to $E_1, \dots, E_8, A_1, A_2$ form 
a generating set of the Cox ring of $\tS$. The ideal of relations in $\Cox(\tS)$
is generated by $\e_2\e_6^2\al_2+\e_4\e_5^2\e_7^3\e_8+\e_3\al_1^2$. We 
express the sections $\pi^*(x_i)$,  for $0\le i\le 3$, of the 
anticanonical class $-K_\tS$ in terms of the generators of $\Cox(\tS)$
as follows: 
\begin{equation*}
  (\pi^*(x_0), \dots, \pi^*(x_3)) = (\sbase 4 3 2 3 2 2\e_7 , 
  \sbase 3 2 2 2 1 1 \al_1, \sbase 2 1 1 2 2 0 \e_7^2 \e_8, \e_8\al_2).
\end{equation*}

The general strategy of \cite{math.NT/0604193} suggests that $U(\QQ)$
should be parametrised by certain integral 
points on the variety $\Spec(\Cox(\tS))$. This is confirmed in the 
the following result.

\begin{lemma}\label{lem:bijection}
  We have
  \[N_{U,H}(B) = \#\TT(B),\] where $\TT(B)$ is the set of $(\ee',\aa) \in
  \ZZp^7 \times \ZZnz \times \ZZ^2$ such that \eqref{eq:torsor} holds,
  with
  \begin{equation}
    \label{eq:height}
    \max\{|\sbase 4 3 2 3 2 2\e_7| , |\sbase 3 2 2 2 1 1 \al_1|, 
    |\sbase 2 1 1 2 2 0 \e_7^2 \e_8|, |\e_8\al_2|\} \le B
  \end{equation}
  and
  \begin{align}
    \label{eq:cpa2} &\cp{\al_2}{\e_1\e_2\e_7},\\
    \label{eq:cpa1} &\cp{\al_1}{\e_1\e_4\e_5},\\
    \label{eq:cpe8} &\cp{\e_8}{\e_1\e_2\e_3\e_4\e_5\e_6},\\
    \label{eq:cpe7} &\cp{\e_7}{\e_1\e_2\e_3\e_4\e_6},\\
    \label{eq:cpe} &\text{coprimality between $\e_1, \dots, \e_6$ as in
      Figure~\ref{fig:dynkin}.}
  \end{align}
\end{lemma}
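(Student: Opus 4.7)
The plan is to apply the descent strategy of \cite{math.NT/0604193} to the explicit Cox-ring data for $\tS$ recorded in Section~3. The algebraic anchor is the monomial identity
\begin{equation*}
x_3 x_0^2 + x_0 x_2^2 + x_2 x_1^2 = \sbase 8 5 4 6 4 2 \eta_7^2 \eta_8 \cdot \bigl(\eta_2 \eta_6^2 \alpha_2 + \eta_4 \eta_5^2 \eta_7^3 \eta_8 + \eta_3 \alpha_1^2\bigr),
\end{equation*}
obtained by substituting the expressions for $\pi^*(x_0), \ldots, \pi^*(x_3)$ into \eqref{eq:surface} and collecting the common monomial factor. This shows that the surface equation is equivalent to the torsor relation \eqref{eq:torsor} on our locus, and that once $\gcd(x_0, \ldots, x_3) = 1$ is verified, the height condition \eqref{eq:height} is exactly $H(\xx) \le B$.

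For the direction $\TT(B) \to U(\QQ)$, I would define $(x_0, \ldots, x_3)$ from $(\ee', \aa)$ by the formulas of Section~3. The conditions $\eta_1, \ldots, \eta_7 > 0$ and $\eta_8 \ne 0$ force $x_0, x_2 \ne 0$, so $\xx$ avoids the three lines $E_6'', E_7'', E_8''$ and lies in $U(\QQ)$. Global coprimality is then verified prime by prime: at each prime $p$, the adjacency pattern of the Dynkin diagram in Figure~\ref{fig:dynkin} combined with \eqref{eq:cpa2}--\eqref{eq:cpe} restricts which subsets of $\{\eta_1, \ldots, \eta_8, \alpha_1, \alpha_2\}$ can be simultaneously divisible by $p$, and in each allowed case at least one of the monomial expressions for $x_i$ is coprime to $p$.

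For the inverse direction $U(\QQ) \to \TT(B)$, I would start from a coprime integer representative of $\xx \in U(\QQ)$, normalized by $x_0 > 0$ (possible because $\xx \in U$ forces $x_0 \ne 0$), and recover $(\ee', \aa)$ by iterated greatest common divisor extractions following the order dictated by Figure~\ref{fig:dynkin}. Working prime by prime, the valuation vector $(v_p(x_0), \ldots, v_p(x_3))$ must be expressible as a nonnegative integer combination of the columns of the exponent matrix of $(\pi^*(x_0), \ldots, \pi^*(x_3))$, with the surface equation modulo powers of $p$ resolving the remaining indeterminacy. Maximality of the extraction then forces \eqref{eq:cpa2}--\eqref{eq:cpe}. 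The main obstacle is precisely this prime-local consistency check: a finite case analysis indexed by the stratum of $\tS$ to which $\xx$ reduces modulo $p$ is required to prove existence and uniqueness of the decomposition, and the surface equation is essential for forcing coincidences among the $v_p(x_i)$ that would otherwise leave the decomposition ambiguous. Once this is settled, the two constructions are mutually inverse.
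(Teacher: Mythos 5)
Your proposal follows the same overall torsor-descent strategy from \cite{math.NT/0604193}, but takes a noticeably different specific route from the paper. The paper does \emph{not} extract gcds directly from the homogeneous coordinates $(x_0,\ldots,x_3)$. Instead it first exploits the projection $\phi_2: S \rto \Ptwo$ from the $\Dfive$ singularity, which together with its inverse $\phi_3$ yields an explicit bijection between $U(\QQ)$ and the set of integer tuples $(\e_4,\al_1,\e_8,\al_2) \in \ZZp\times\ZZ\times\ZZnz\times\ZZ$ satisfying $\gcd(\e_4,\al_1,\al_2)=1$ and $\al_2+\e_4\e_8+\al_1^2=0$. The remaining six variables are then introduced by an explicit chain of gcd extractions, $\e_5=\gcd(\e_4,\e_8)$, $\e_7=\gcd(\e_5,\e_8)$, $\e_3=\gcd(\e_4,\al_1,\al_2)$, $\e_1=\gcd(\e_3,\e_4,\al_2)$, $\e_2=\gcd(\e_1,\al_2)$, $\e_6=\gcd(\e_2,\al_2)$, whose order is dictated by the sequence in which $\phi_1$ contracts the exceptional curves. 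This intermediate $\Ptwo$ step collapses the problem to a single coprimality condition on four variables and a quadratic relation, after which the gcd extractions become an essentially mechanical verification. Your direct approach, working with $(x_0,\ldots,x_3)$ and a prime-by-prime inversion of the $4\times 10$ exponent matrix, is plausible in principle (the monomial identity you write down is correct, and the general torsor formalism guarantees a unique representative once signs and coprimality are imposed), but it leaves considerably more to do: the scaling between $(x_0,x_1,x_2)$ and the gcd-reduced $(\e_4,\al_1,\e_8)$ is absorbed automatically in the paper's $\Ptwo$ step, whereas in your setup it has to emerge from the local analysis, and your gcd-order is only gestured at via Figure~\ref{fig:dynkin} rather than spelled out. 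Neither you nor the paper carries out the final elementary check, but the paper's route makes that check genuinely routine, while yours would require a more delicate case analysis than the phrase ``finite case analysis indexed by the stratum'' suggests.
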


The coprimality conditions in \eqref{eq:cpe} are achieved by taking $\e_i$ and
$\e_j$ to be coprime if and only if the divisors $E_i$ and $E_j$ are not
adjacent in the diagram.  The reader is invited to consider the correspondence
between
\begin{itemize}
\item the variables of the parametrisation and the generators of $\Cox(\tS)$,
\item the torsor equation~(\ref{eq:torsor}) and the relation in $\Cox(\tS)$,
\item the height conditions~(\ref{eq:height}) and the expressions of
  $\pi^*(x_i)$ in terms of the generators of $\Cox(\tS)$,
\item the coprimality conditions~(\ref{eq:cpa2})--~(\ref{eq:cpe}) and the
  configuration of the curves associated to the generators of $\Cox(\tS)$
  encoded in Figure~\ref{fig:dynkin}.
\end{itemize}

The proof of Lemma~\ref{lem:bijection} is elementary, but modelled according
to the geometry of $S$. The following additional geometric information is
relevant. Contracting $E_6, E_2, E_1, E_3, E_7, E_5$ in this order leads to a
map $\phi_1: \tS \to \Ptwo$ that is the blow-up of six points in the
projective plane. We may choose $\phi_1(E_4), \phi_1(A_1), \phi_1(E_8)$ as the
coordinate lines in $\Ptwo = \{(\e_4': \al_1': \e_8')\}$. Then $\phi_1(A_2)$
is the quadric $\e_4'\e_8'+\al_1'^2=0$. The morphisms $\phi_1, \pi$ and the
projection
\begin{equation*}
  \begin{array}[h]{cccc}
    \phi_2: & S & \rto & \Ptwo,\\
    & \xx & \mapsto & (x_0:x_1:x_2)
  \end{array}
\end{equation*}
from the singularity $(0:0:0:1)$, form a commutative diagram of rational maps
between $\tS, S$ and $\Ptwo$. The inverse map of $\phi_2$ is
\begin{equation*}
  \begin{array}[h]{cccc}
    \phi_3: & \Ptwo&\rto&S,\\
    &(\e_4':\al_1':\e_8')&\mapsto&(\e_4'^3:\e_4'^2\al_1':\e_4'^2\e_8':\e_8'\al_2')
  \end{array}
\end{equation*}
where $\al_2'=-\e_4'\e_8'-\al_1'^2$. The maps $\phi_2, \phi_3$ give a
bijection between the complement $U$ of the lines on $S$ and $\{(\e_4':
\al_1': \e_8') \in \Ptwo \mid \e_4', \e_8' \ne 0\}$, and furthermore, induces
a bijection between $U(\QQ)$ and the integral points
\begin{equation*}
  \{(\e_4, \al_1, \e_8,\al_2) \in \ZZp \times \ZZ \times \ZZnz \times \ZZ \mid
  \gcd(\e_4,\al_1,\al_2)=1,\ \al_2+\e_4\e_8+\al_1^2=0\}.
\end{equation*}

Motivated by the way the curves $E_5, E_7, E_3, E_1, E_2, E_6$ occur in
$\phi_1$ as the blow-ups of intersection points of $\phi_1(E_4), \phi_1(E_8),
\phi_1(A_1), \phi_1(A_2)$, one introduces the following further variables
\begin{equation*}
  \begin{array}[h]{lll}
    \e_5=\gcd(\e_4,\e_8),&\e_7=\gcd(\e_5,\e_8),& 
    \e_3=\gcd(\e_4, \al_1, \al_2),\\
    \e_1=\gcd(\e_3,\e_4,\al_2),&\e_2=\gcd(\e_1,\al_2)&\e_6=\gcd(\e_2,\al_2).
  \end{array}
\end{equation*}
Although we omit the details here, it is now straightforward to derive the
bijection described in the statement of Lemma \ref{lem:bijection} using
elementary number theory.

In analysing the height conditions apparent in \eqref{eq:height} we will meet
a number of real-valued functions, whose size it will be crucial to
understand.  We begin with the observation that \eqref{eq:height} is
equivalent to $h(\ee',\al_1;B) \le 1$,
where \[h(\ee',\al_1;B)=B^{-1}\max\left\{
\begin{aligned}
  &|\sbase 4 3 2 3 2 2\e_7| , |\sbase 3 2 2 2 1 1 \al_1|,\\
  & |\sbase 2 1 1 2 2 0 \e_7^2 \e_8|,
  \left|\frac{\e_4\e_5^2\e_7^3\e_8^2+\e_3\e_8\al_1^2}{\e_2\e_6^2}\right|
\end{aligned}
\right\}.\]
In what follows we will need to work with
the regions 
\[
\begin{split}
  \R(B) &= \{(\ee', \al_1) \in \RR^9 \midd \e_1, \dots, \e_7, |\e_8| \ge 1,\
  h(\ee', \al_1;B) \le 1\},\\
  \R'_1(B) &=\{\ee \in \RR^6 \mid \e_1, \dots, \e_6 \ge 1,\ \sbase 4 3 2 3 2
  2
  \le B,\ \sbase 6 5 3 4 2 4 \ge B\},\\
  \R'_2(\ee;B) &=\{(\e_7,\e_8,\al_1) \in \RR^3 \mid \e_7 \ge 0,\
  h(\ee',\al_1;B)
  \le 1\},\\
  \R'(B) &= \{(\ee',\al_1) \in \RR^9 \mid \ee \in \R'_1(B), (\e_7,\e_8,\al_1)
  \in \R'_2(\ee;B)\},\\
  &= \left\{(\ee', \al_1) \in \RR^9 \midd
    \begin{aligned}
      &\e_1, \dots, \e_6 \ge 1,\ \e_7 \ge 0,\ h(\ee', \al_1;B) \le 1,\\
      &\sbase 4 3 2 3 2 2 \le B,\ \sbase 6 5 3 4 2 4 \ge B
    \end{aligned}
\right\}.
\end{split}
\]
In keeping with the philosophy of \cite{arXiv:0710.1560}, the
definitions of these regions is dictated by the polytope whose volume
is defined to be the constant
$\alpha(\tS)$, as computed using an alternative method in the introduction. In fact one has
\begin{equation}\label{eq:alpha_volume}
\begin{split}
  \alpha(\tS) &= \vol\left\{\xx \in \RR_{\ge 0}^7 \midd
  \begin{aligned}
    &2x_1+2x_2+x_3+x_4+2x_6-x_7 \ge 0,\\
    &4x_1+3x_2+2x_3+3x_4+2x_5+2x_6+x_7=1
  \end{aligned}
  \right\}\\
  &= \vol\left\{\xx \in \RR_{\ge 0}^6 \midd
  \begin{aligned}
    &6x_1+5x_2+3x_3+4x_4+2x_5+4x_6 \ge 1,\\
    &4x_1+3x_2+2x_3+3x_4+2x_5+2x_6 \le 1
  \end{aligned}
  \right\},
\end{split}
\end{equation}
to which $\R'_1(B)$ is closely related.

Perhaps a few more words are in order concerning the role of the cone of
effective divisors in our work. The parametrisation of $U(\QQ)$ in Lemma
\ref{lem:bijection} suggests that $N_{U,H}(B)$ should be comparable to the
volume of $\R(B)$. On the other hand, the factors $\alpha(\tS)$ and
$\omega_\infty$ of the conjectured leading constant in our theorem suggest the
appearance of $\R'(B)$ instead.  The latter is constructed from $\R'_1(B)$,
which comes from the dual of the effective cone, and from 
$\R'_2(\ee;B)$, which is obtained from the region whose volume is
$\omega_\infty$.  At some point we will therefore need to make a transition
from $\R(B)$ to $\R'(B)$.  Rather than distributing this procedure over the
entire proof, as in our previous investigation \cite{arXiv:0710.1560}, we will
save this transition until Lemma~\ref{lem:final_step}, where it signifies the
final step in our argument.

We are now ready to record the various integrals that will feature in
our work, together with some basic estimates for them.
All of the bounds are simple enough to deduce in themselves,
but readily follow from applications of 
\cite[Lemma~5.1]{manin_auxiliary}.
Bearing this in mind, we have
\begin{align}
  V_1^a(\ee';B) &= \int_{(\ee', t_1) \in \R(B),\ \e_7 \ge
    |\e_8|}
  \frac{1}{\e_2\e_6^2} \dd t_1,\notag\\
  V_1^b(\ee';B) &= \int_{(\ee', t_1) \in \R(B),\ |\e_8| >
    \e_7}
  \frac{1}{\e_2\e_6^2} \dd t_1,\notag\\
  \label{eq:estimate_V1} 
V_1(\ee';B) &= \sum_{\iota\in\{a,b\}}V_1^\iota(\ee';B) 
 \ll
  \frac{B^{1/2}}{\e_2^{1/2}\e_3^{1/2}\e_6|\e_8|^{1/2}},
\end{align}
and
\begin{align}
  V_2^a(\ee,\e_8;B) &= \int_{t_7} V_1^a(\ee,t_7,\e_8;B) \dd t_7\notag\\ 
  \label{eq:estimate_V2a} &\ll
  \min\left\{\frac{B^{5/6}}{\sbase{0}{1/6}{1/2}{1/3}{2/3}{1/3}|\e_8|^{7/6}},
    \frac{B^2}{\sbase 7 6 4 5 3 5}\right\},\\
  \label{eq:estimate_V2b} V_2^b(\ee,\e_7;B) &= \int_{t_8}
  V_1^b(\ee,\e_7,t_8;B) \dd t_8 \ll
  \frac{B^{3/4}}{\sbase{0}{1/4}{1/2}{1/4}{1/2}{1/2}\e_7^{3/4}},
\end{align}
and finally
\begin{align}
  V_3^a(\ee;B)&=\int_{t_8} V_2^a(\ee,t_8;B) \dd t_8,\notag\\
  V_3^b(\ee;B)&=\int_{t_7} V_2^b(\ee,t_7;B) \dd t_7,\notag\\
  \label{eq:estimate_V3} V_3(\ee;B)&=V_3^a(\ee;B) + V_3^b(\ee;B) \ll
  \frac{B}{\sbase 1 1 1 1 1 1}.
\end{align}
We now have everything in place to start the proof of the theorem.

\section{First summation}

For fixed $\e_1, \dots, \e_8$, let $N_1$ be the number of $(\al_1, \al_2)$
that contribute to $N_{U,H}(B)$.  Let $I=I(\ee';B)$ be the set of $t_1 \in
\RR$ satisfying $h(\ee',t_1;B)\le 1$. By definition, $V_1(\ee';B) =
\meas(I)/(\e_2\e_6^2)$.  

We would like to begin by applying \cite[Proposition~2.4]{manin_auxiliary},
which is concerned with a much more general setting. In order to facilitate
our use of this result, Table~\ref{t:dict} presents a dictionary between the
notation adopted in \cite{manin_auxiliary} and the special case considered
here.

\begin{table}[!ht]
\begin{center}
\begin{tabular}{|r|l||r|l|}
\hline
$(r,s,t)$ & $(3,1,2)$& $ \delta$ & $\eta_1$\\
\hline 
($\alpha_0$;$\alpha_1,\ldots,\alpha_r$) & $(\eta_8;\eta_4,\eta_5,\eta_7)$ &
$(a_0;a_1,\ldots,a_r)$ & $(1;1,2,3)$ \\
\hline
($\beta_0$;$\beta_1,\ldots,\beta_s$) & $(\al_1;\eta_3)$ &
$(b_0;b_1,\ldots,b_s)$ & $(2;1)$ \\
\hline
($\gamma_0$;$\gamma_1,\ldots,\gamma_t$) & $(\al_2;\eta_2,\eta_6)$ &
$(c_1,\ldots,c_t)$ & $(1,2)$ \\
\hline
$\Pi(\aa)$ & $\eta_4\eta_5^2\eta_7^3$ & $\Pi'(\delta,\aa))$ & $\eta_1\eta_4\eta_5$\\
\hline
$\Pi(\bb)$ & $\eta_3$ & $\Pi'(\delta,\bb))$ & $\eta_1$\\
\hline
$\Pi(\gag)$ & $\eta_2\eta_6^2$ & $\Pi'(\delta,\gag))$ & $\eta_1\eta_2$\\
\hline
\end{tabular}
\end{center}
\caption{Dictionary for applying \cite[Proposition~2.4]{manin_auxiliary}}
\label{t:dict}
\end{table}

We may now apply \cite[Proposition~2.4]{manin_auxiliary} to deduce that
\[N_1 = \theta_1(\ee')V_1(\ee';B)+R_1(\ee';B),\]
where
$$
  \theta_1(\ee')=
\sum_{\substack{k|\e_1\e_2\\\cp{k}{\e_3\e_4}}} 
\frac{\mu(k)   
    \phis(\e_1\e_4\e_5\e_7)}{k\phis(\gcd(\e_1,k\e_2))} 
\sum_{\substack{1 \le \rho
      \le k\e_2\e_6^2\\\congr{\e_4\e_7\e_8}{-\rho^2\e_3}{k\e_2\e_6^2}\\\cp{\rho}{k\e_2\e_6^2}}}1
$$
and the error term $R_1(\ee';B)$ is the sum of terms of the form 
$$
\sum_{\substack{k_2|\e_1\e_2\\\cp{k_2}{\e_3\e_4}}} \mu(k_2)
\sum_{\substack{k_1|\e_1\e_4\e_5\e_7\\\cp{k_1}{k_2\e_2}}} \mu(k_1)
\sum_{\substack{1\le \rho \le k_2\e_2\e_6^2\\
    \congr{\e_4\e_7\e_8}{-\rho^2\e_3}{k_2\e_2\e_6^2}\\\cp{\rho}{k_2\e_2\e_6^2}}}
A,
$$
with
\[A=\psi\left( \frac{k_1^{-1}b_0-\rho\e_5\e_7\overline{k_1}}
  {k_2\e_2\e_6^2}\right) -
\psi\left(\frac{k_1^{-1}b_1-\rho\e_5\e_7\overline{k_1}}
  {k_2\e_2\e_6^2}\right),\] 
one for each of the intervals that form $I$,
with start and end points $b_0=b_0(\ee';B)$ and $b_1=b_1(\ee';B)$.
Here, $\overline{a}$ denotes the multiplicative inverse of an integer $a\in
(\ZZ/k_2\e_2\e_6^2\ZZ)^*$.  
Our first task is to show that the overall contribution from $R_1$ makes a
satisfactory contribution to $N_{U,H}(B)$.

\begin{lemma}\label{lem:d5_first_sum} We have
  \[N_{U,H}(B) = \sum_{\substack{\ee' \in \ZZp^7 \times
    \ZZnz\\\text{(\ref{eq:cpe8}), (\ref{eq:cpe7}), (\ref{eq:cpe})}}}
    \theta_1(\ee')V_1(\ee';B) + O(B(\log B)^5)\]
\end{lemma}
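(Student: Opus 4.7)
Since $N_{U,H}(B) = \sum_{\ee'} N_1 = \sum_{\ee'} \theta_1(\ee')V_1(\ee';B) + \sum_{\ee'} R_1(\ee';B)$, with the outer summation respecting \eqref{eq:cpe8}, \eqref{eq:cpe7}, \eqref{eq:cpe}, my goal is to show that
\[
\sum_{\ee'} R_1(\ee';B) \ll B(\log B)^5.
\]
The strategy is to single out $\e_8$ as the ``interval'' variable and $\rho$ as the ``quadratic'' variable, so that the innermost double sum inside $R_1$, once summed over $\e_8$, matches the shape of $S_I(f,q)$ and becomes amenable to Lemma~\ref{lem:lemma10}.

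I would begin by exchanging the order of summation to put $\e_8$ and $\rho$ innermost, fixing $\e_1,\dots,\e_7$ together with the divisor parameters $k_1, k_2$. For such fixed data, the congruence $\e_4\e_7\e_8\equiv -\rho^2\e_3\pmod{q}$, with $q=k_2\e_2\e_6^2$, can be recast as $\rho^2\equiv b\e_8\pmod q$ where $b\equiv -\e_4\e_7\overline{\e_3}$, using \eqref{eq:cpe}, \eqref{eq:cpe7} and $\gcd(k_2,\e_3\e_4)=1$ to verify $\gcd(b,q)=1$. The $\psi$-arguments appearing in $A$ are then of the form $(f(\e_8)-c\rho)/q$ with $c\equiv \e_5\e_7\overline{k_1}\pmod q$ again coprime to $q$, and with $f$ a linear function of $\e_8$ coming from $k_1^{-1}b_i(\ee';B)$. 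The height condition \eqref{eq:height} forces $\e_8$ to lie in an interval $I$ on which $f \in C^1(I;\lambda_0)$ for an appropriate $\lambda_0$ determined by the Leray-type denominator $\e_2\e_6^2$ in $h(\ee',t_1;B)$.

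At this point Lemma~\ref{lem:lemma10} gives a bound for the innermost sum of the shape
\[
\left(h_2(q)q^{1/2} + \tau(q)^2\frac{\gothm(I)}{q} + \frac{h_1(q)\lambda_0^{1/2}\gothm(I)^{1/2}}{q^{1/4}\log X}\right)(\log X)^2,
\]
and the crucial feature is the middle term with denominator $q$ rather than $q^{1/3}$, exactly the saving flagged after that lemma. I would then sum these three contributions over $k_1 \mid \e_1\e_4\e_5\e_7$ and $k_2 \mid \e_1\e_2$, absorbing the divisor sums into factors of $\tau$ and into the arithmetic functions $h_1, h_2$, and finally sum over $\e_1,\dots,\e_7$ using the integrals $V_2^a, V_2^b, V_3$ of \eqref{eq:estimate_V2a}--\eqref{eq:estimate_V3} to handle the sums over $\e_7$ and $\e_8$-interval length (the latter playing the role of $\gothm(I)$), while Lemma~\ref{lem:sum_h_k} controls the averages of $h_k(q)$.

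The main obstacle will be the bookkeeping required to match the exponent of $\log B$: after the $q^{-1}$ and $q^{-1/4}$ factors are distributed against the $V$-estimates and the arithmetic averages, one must verify that each of the three terms yields at most five powers of $\log B$. The worst of these is the $\tau(q)^2\gothm(I)/q$ term, where the full length of the $\e_8$-interval appears and must be absorbed by $1/q$ together with the remaining $V$-volume estimates — this is precisely where the improved quadratic bound of Lemma~\ref{lem:lemma10}, rather than the cubic one from \cite{MR2332351}, is essential in keeping within the error term $O(B(\log B)^5)$.
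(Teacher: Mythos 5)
Your overall strategy is exactly the paper's: recast the inner $\e_8$--$\rho$ double sum in the shape of $S_I(f,q)$, apply Lemma~\ref{lem:lemma10}, and exploit the improved middle term $\tau(q)^2\gothm(I)/q$ to land within $O(B(\log B)^5)$. The identification of $b \equiv -\e_4\e_7\overline{\e_3}$, $c \equiv \e_5\e_7\overline{k_1}$ and the verification that these are coprime to $q = k_2\e_2\e_6^2$ is correct as far as it goes, and you have correctly flagged the role of the quadratic (rather than cubic) exponential-sum saving.

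However, there is a genuine gap: you keep the condition~\eqref{eq:cpe8}, namely $\cp{\e_8}{\e_1\e_2\e_3\e_4\e_5\e_6}$, in the outer sum while also treating $\e_8$ as the inner ``interval'' variable $x$ of $S_I(f,q)$. These two choices are incompatible. The sum $S_I(f,q)$ in Lemma~\ref{lem:lemma10} runs over \emph{all} integers $x \in \ZZ\cap I$ with no coprimality restriction, but $\e_8$ is constrained by~\eqref{eq:cpe8}. One must first remove this condition by an additional M\"obius inversion, introducing a divisor variable $k_8 \mid \e_1\e_2\e_3\e_4\e_5\e_6$ (which may then be restricted to $k_8\mid \e_1\e_3\e_4\e_5$ with $\cp{k_8}{q}$ since $\cp{\e_3}{q}$), and writing $\e_8 = k_8\e_8'$ with $\e_8'$ unconstrained. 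This changes the effective parameter to $b = -\e_4\e_7 k_8\overline{\e_3}$, shrinks the interval to $I'$ for $\e_8'$, and introduces an extra divisor sum $\sum_{k_8}\mu(k_8)$ whose contribution must be absorbed into the final estimate (producing a further factor of $2^{\omega(\e_1\e_3\e_4\e_5)}$ in the bookkeeping). Without this step Lemma~\ref{lem:lemma10} simply cannot be invoked. A secondary, minor point is that the paper does not sum the resulting bound against the volumes $V_2^a, V_2^b, V_3$; instead it substitutes the explicit bounds $\gothm(I'') \ll B/(\sbase 211220\e_7^2)$ and $\lambda_0 \ll B/\sbase 322211$ coming from the height inequalities and then sums directly over $\e_1,\dots,\e_7$ and $k_1,k_2,k_8$ with the help of Lemma~\ref{lem:sum_h_k}; the $V$-integrals only enter at the second- and third-summation stages.
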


\begin{proof}
  We must show that once summed over $\ee' \in \ZZp^7 \times
    \ZZnz$ such that (\ref{eq:cpe8}), (\ref{eq:cpe7}) and
    (\ref{eq:cpe}) hold, the term $R_1(\ee';B)$ contributes $O(B(\log
  B)^5)$.
    Let $q=k_2\e_2\e_6^2$. 
  We remove (\ref{eq:cpe8}) by a M\"obius inversion.  This leads us to
    estimate 
  \begin{equation*}
    \sum_{\substack{(\ee,\e_7) \in \ZZp^7 \\\text{(\ref{eq:cpe7}),
          (\ref{eq:cpe})}}} R_1'(\ee,\e_7;B),
  \end{equation*}
  where $R_1'(\ee,\e_7;B)$ is defined to be
  \begin{equation*}
    \sum_{\substack{k_2|\e_1\e_2\\
        \cp{k_2}{\e_3\e_4}}} \mu(k_2)
    \sum_{\substack{k_1|\e_1\e_4\e_5\e_7\\\cp{k_1}{k_2\e_2}}} \mu(k_1)
    \sum_{k_8|\e_1\e_2\e_3\e_4\e_5\e_6} \mu(k_8) A',
  \end{equation*}
  with
  $$
  A'=\sum_{\e_8' \in \ZZ \cap I'} 
  \sum_{\substack{1\le \rho \le q\\
      \congr{\e_4\e_7 k_8\e_8'}{-\rho^2\e_3}{q}\\\cp{\rho}{q}}}
  \sum_{i \in \{0,1\}} (-1)^i
  \psi\left( \frac{k_1^{-1}b_i-\rho\e_5\e_7\overline{k_1}}{q}\right),
  $$
  where $I'$ is the allowed interval for $\e_8'$ and 
  $b_0,b_1$ as above depend on $\e_1, \dots, \e_7$ and $\e_8=k_8\e_8'$.
  We may split the summation over $\e_8' \in I'$ into subintervals $I''$ where
  we have 
  $b_0,b_1 \in C^1(I'',\lambda_0)$ 
  as functions of $\e_8'$.  In view of the bounds for $|k_8\e_8'|$ and
  $|k_1\al_1'|$ 
  that follow from the inequalities in the definition of
  $\mathcal{R}(B)$, it follows that
  $$
  \gothm(I'')\ll 
  \frac{B}{\sbase 211220 \e_7^2}, \quad 
  \lambda_0 \ll \frac{B}{\sbase 322211}.
  $$

  Since $\cp{\e_3}{q}$, we may restrict the summation over
  $k_8$ to $k_8 \mid \e_1\e_3\e_4\e_5$ such that
  $\cp{k_8}{q}$. Then $\cp{\e_3\e_4\e_7k_8}{q}$
  and 
  $\cp{\e_5\e_7\overline{k_1}}{q}$, 
  so that we can
  apply Lemma~\ref{lem:lemma10} to obtain
  $$
    A' \ll \left(h_2(q) q^{1/2}+ \frac{\tau(q)^2B}{\sbase
      221222 \e_7^2} + \frac{h_1(q)B}{(\log B)\sbase
      {5/2}{7/4}{3/2}{2}{3/2}{1}\e_7}\right)(\log B)^2.
  $$

  Note that $h_k(\eta_2\eta_6^2)\ll_\ve \eta_2^\ve h_{2k}(\eta_6)$ for any
  $k\in \ZZp$. Writing, temporarily, $\mathcal{L}=\log B$ we deduce that the
  total contribution from the first term is
  \begin{equation*}
    \begin{split}
      \sum_{\e_1, \dots, \e_7} \sum_{k_1,k_2,k_8} h_2(q)\mathcal{L}^2q^{1/2}
      &\ll_\ve \sum_{\e_1, \dots, \e_7} (\e_1\e_2\e_3\e_4\e_5)^\ve
      2^{\omega(\e_7)} h_4(\e_6) \mathcal{L}^2
      \e_1^{1/2} \e_2\e_6\\
      &\ll_\ve  \sum_{\e_1, \dots, \e_6} (\e_1\e_2\e_3\e_4\e_5)^\ve h_4(\e_6) 
      \mathcal{L}^3 \frac{B}{\sbase {7/2} 2 2 3 2 1}\\
      &\ll_\ve B \mathcal{L}^5,
    \end{split}
  \end{equation*}
  by Lemma~\ref{lem:sum_h_k}.
  The total contribution from the second term is
  \begin{equation*}
    \begin{split}
      \sum_{\e_1, \dots, \e_7} \sum_{k_1,k_2,k_8} \tau(q)^2 \mathcal{L}^2
      \frac{B}{\sbase 2 2 1 2 2 2 \e_7^2}&
      \ll B \mathcal{L}^2 \sum_{\e_3} \frac{ 2^{\omega(\e_3)}}{\e_3}\\
      &\ll B\mathcal{L}^4.
    \end{split}
  \end{equation*}
  Finally, the total contribution from the third term is
  \begin{equation*}
    \begin{split}
      \sum_{\e_1, \dots, \e_7} \sum_{k_1,k_2,k_8}
      \frac{h_1(q)B\mathcal{L}}{\sbase {5/2}{7/4}{3/2}{2}{3/2}{1}\e_7}
      &\ll_\ve B \mathcal{L}\sum_{\e_1, \dots, \e_7}
      \frac{(\e_1\e_2\e_3\e_4\e_5)^\ve 2^{\omega(\e_7)}
        h_2(\e_6)}{\sbase{5/2}{7/4}{3/2}{2}{3/2}{1} \e_7}\\
      &\ll_\ve B\mathcal{L}^{5}.
    \end{split}
  \end{equation*}
  This therefore completes the proof of the lemma.  
\end{proof}

\section{Second summation}

Let $N_{U,H}^a(B)$ be the number of $(\ee', \aa) \in \TT(B)$ subject
to $|\e_8| \le \e_7$, and let $N_{U,H}^b(B)$ be the remaining number of 
elements of $\TT(B)$. Lemma~\ref{lem:d5_first_sum} can be modified
in an obvious way to give estimates for $N_{U,H}^a(B)$ and
$N_{U,H}^b(B)$. For $N_{U,H}^a(B)$, we sum over $\e_7$ first and over
$\e_8$ afterwards, and for $N_{U,H}^b(B)$, we do the reverse.

\subsection{Case $|\e_8| > \e_7$}

We rewrite the result of Lemma~\ref{lem:d5_first_sum} as follows. Removing
(\ref{eq:cpe8}) by a M\"obius inversion, and adding $\cp{k_8}{k\e_2\e_6^2}$ to
prevent that $A=0$, we arrive at the formula
\begin{multline*}
  N_{U,H}^b(B) = \sum_{\substack{(\ee,\e_7) \in \ZZp^7\\\text{(\ref{eq:cpe7}),
        (\ref{eq:cpe})}}} \sum_{\substack{k\mid
      \e_1\e_2\\\cp{k}{\e_3\e_4}}} \frac{\mu(k)
    \phis(\e_1\e_4\e_5\e_7)}{k\phis(\gcd(\e_1,k\e_2))}\\\times
  \sum_{\substack{1 \le \rho \le k\e_2\e_6^2\\\cp{\rho}{k\e_2\e_6^2}}}
  \sum_{\substack{k_8|\e_1\e_3\e_4\e_5\\\cp{k_8}{k\e_2\e_6^2}}} \mu(k_8) A +
  O(B(\log B)^5),
\end{multline*}
where \[A = \sum_{\substack{\e_8' \in
    \ZZnz\\\congr{\e_4\e_7k_8\e_8'}{-\rho^2\e_3}{k\e_2\e_6^2}\\k_8|\e_8'|>
    \e_7}} V_1^b(\ee,\e_7,k_8\e_8';B).\]

\begin{lemma}\label{lem:8}
  We have
  \[N_{U,H}^b(B) = \sum_{\substack{(\ee,\e_7) \in
      \ZZp^7\\\text{(\ref{eq:cpe7}), (\ref{eq:cpe})}}} \theta_2^b(\ee,\e_7)
      V_2^b(\ee,\e_7;B)+O(B(\log B)^5),\]
      where \[\theta_2^b(\ee,\e_7)=\phis(\e_1\e_2\e_3\e_4\e_5\e_6)
      \phis(\e_1\e_2\e_4\e_5\e_7) \prod_{\substack{p\mid \e_1\\p \nmid
          \e_2\e_3\e_4}} \frac{1-2/p}{1-1/p}.\]
\end{lemma}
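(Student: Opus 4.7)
The plan is to execute a second lattice-point summation over $\e_8'$ by approximating sums over arithmetic progressions by integrals, then collapsing the resulting Möbius and residue-class factors into the arithmetic function $\theta_2^b$. Under (\ref{eq:cpe7}), (\ref{eq:cpe}) together with $\cp{k_8}{k\e_2\e_6^2}$, the coefficient $\e_4\e_7 k_8$ is coprime to $q:=k\e_2\e_6^2$, so for each residue $\rho$ coprime to $q$ the congruence $\congr{\e_4\e_7 k_8\e_8'}{-\rho^2\e_3}{q}$ pins $\e_8'$ to a single class modulo $q$. The inner sum $A$ is therefore a sum of $V_1^b(\ee,\e_7,k_8\e_8';B)$ over an arithmetic progression of common difference $q$; the auxiliary constraint $k_8|\e_8'|>\e_7$ is automatic, since the defining inequality $|\e_8|>\e_7$ in $V_1^b$ forces the integrand to vanish otherwise.

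As a function of $t$, $V_1^b(\ee,\e_7,k_8 t;B)$ is piecewise of class $C^1$ (with finitely many break points arising from the case analysis in the definition of $\R(B)$) and of bounded variation. A standard partial-summation argument, or a direct appeal to the auxiliary result from \cite{manin_auxiliary} invoked in the proof of Lemma~\ref{lem:d5_first_sum}, yields
\begin{equation*}
  A = \frac{1}{q k_8}\,V_2^b(\ee,\e_7;B) + E(\ee,\e_7,k,\rho,k_8;B),
\end{equation*}
where the factor $1/k_8$ arises from the change of variable $t_8 = k_8 t$ in $V_2^b = \int V_1^b\,\dd t_8$, and $E$ is controlled by the supremum and total variation of $V_1^b(\ee,\e_7,k_8\,\cdot\,;B)$.

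For the main term, summing over $\rho$ coprime to $q$ (there are $q\phis(q)$ such residues) cancels $1/q$ and leaves $\phis(q)$. Next, the Möbius sum $\sum_{k_8\mid\e_1\e_3\e_4\e_5,\ \cp{k_8}{q}}\mu(k_8)/k_8$ evaluates to $\prod_{p\mid\e_1\e_3\e_4\e_5,\ p\nmid q}(1-1/p)$. Finally, one performs the outer sum over $k\mid\e_1\e_2$ coprime to $\e_3\e_4$, and verifies prime-by-prime that the full local factor at each prime $p$ agrees with the $p$-component of $\theta_2^b(\ee,\e_7)$. The delicate primes are those with $p\mid\e_1$ but $p\nmid\e_2\e_3\e_4$: there the combined contribution of both Möbius sums produces the correction $(1-2/p)/(1-1/p)$ visible in $\theta_2^b$. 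This local identity is the main obstacle of the proof and requires a case analysis by the divisibility pattern of $p$ among $\e_1,\ldots,\e_7$.

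It remains to check that the error term contributes $O(B(\log B)^5)$. Bounding $V_1^b(\ee,\e_7,k_8 t;B) \ll V_1(\ee';B)$ and applying \eqref{eq:estimate_V1} together with an analogous bound for the total variation, one sums $E$ over $\rho$, $k_8$, $k$ and finally $(\ee,\e_7)$, using Lemma~\ref{lem:sum_h_k} to dispose of divisor-type sums. The resulting bound matches, in shape, the three error contributions estimated at the end of the proof of Lemma~\ref{lem:d5_first_sum}, and is therefore absorbed by $O(B(\log B)^5)$.
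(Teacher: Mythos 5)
Your main-term computation follows the right strategy, but there is a genuine gap in your treatment of the error, and it is precisely the point for which the paper proves Lemma~\ref{lem:sum_mod_q}.

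You propose to decompose, for each fixed $(k,k_8,\rho)$, $A=\frac{1}{qk_8}V_2^b+E$ with $E$ bounded by $\sup V_1^b$ (or its variation), and then to ``sum $E$ over $\rho$, $k_8$, $k$ and finally $(\ee,\e_7)$, using Lemma~\ref{lem:sum_h_k}''. But a per-$\rho$ error of order $\sup V_1^b$ accumulates trivially to roughly $q\phis(q)\cdot\sup V_1^b$ after the $\rho$-sum, and there is no cancellation in your account. Tracing this through the $(\ee,\e_7)$-sum via (\ref{eq:estimate_V1}) and $\sbase 432322\e_7\le B$, the resulting total is of order about $B^{3/2}$, not $O(B(\log B)^5)$: compared with the paper's error factor $\ll h_1(\e_2\e_6^2)\log(\e_2\e_6^2+1)(\e_2\e_6^2)^{1/2}$ your trivial accounting gives $\ll\e_2\e_6^2$, and the missing $(\e_2\e_6^2)^{1/2}$ makes the subsequent $\e_6$-sum diverge.

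The paper's proof avoids this by reversing the order of operations. It first forms the counting function $N(t_1,t_2)$, already summed over $k$, $k_8$ and $\rho$, and observes that the deviation of the $\e_8'$-count from its expected density is a sum of $\psi\bigl((k_8^{-1}t_i-a\rho^2)/q\bigr)$, i.e.\ $\psi$ of a quadratic polynomial in $\rho$. Applying Lemma~\ref{lem:sum_mod_q} to the $\rho$-sum then gives a bound $\ll h_1(q)\log(q+1)q^{1/2}$ rather than the trivial $O(q)$; this $q^{1/2}$-saving is exactly what is needed. Only \emph{after} this cancellation is exploited does the paper apply partial summation against $V_1^b$. Your proposal never invokes Lemma~\ref{lem:sum_mod_q}, so the crucial cancellation is absent. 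If you restructure as (i) form $N(t_1,t_2)=\sum_{k,k_8,\rho}\#\{\e_8'\in(t_1/k_8,t_2/k_8]:\congr{\e_4\e_7k_8\e_8'}{-\rho^2\e_3}{q}\}$, (ii) estimate $N(t_1,t_2)$ using Lemma~\ref{lem:sum_mod_q}, and (iii) then perform partial summation with $V_1^b$, the argument closes.
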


\begin{proof}
  Let $q=k\e_2\e_6^2$ and 
  \begin{multline*} N(t_1,t_2) = 
    \sum_{\substack{k\mid \e_1\e_2\\\cp{k}{\e_3\e_4}}} \frac{\mu(k)
      \phis(\e_1\e_4\e_5\e_7)}{k\phis(\gcd(\e_1,k\e_2))}\\\times
    \sum_{\substack{k_8|\e_1\e_3\e_4\e_5\\\cp{k_8}{q}}}\mu(k_8)
    \sum_{\substack{1 \le \rho \le q\\\cp{\rho}{q}}} N'_{k,k_8}(\rho;t_1,t_2)
  \end{multline*}
  where \[N'_{k,k_8}(\rho;t_1,t_2)=\{\e_8' \in (t_1/k_8,t_2/k_8] \mid
  \congr{\e_4\e_7k_8\e_8'}{-\rho^2\e_3}{q}\}.\] 

  As in \cite[Section~8.3]{MR2332351}, we have \[N'_{k,k_8}(\rho;t_1,t_2) =
  \frac{t_2-t_1}{k_8q} + \psi\left(\frac{k_8^{-1}t_1-a\rho^2}{q}\right)-
  \psi\left(\frac{k_8^{-1}t_2-a\rho^2}{q}\right)\] where $a$ is the unique
  integer modulo $q$ with  $\congr{\e_4\e_7k_8a}{-\e_3}{q}.$ Clearly 
   $\congr{\e_4\e_7k_8\e_8'}{-\rho^2\e_3}{q}$ is
  equivalent to $\congr{\e_8'}{a\rho^2}{q}$ for any such $a$.
  Using Lemma~\ref{lem:sum_mod_q}, we deduce that $N(t_1,t_2)$ is
  $$
   (t_2-t_1)\theta_2^b(\ee,\e_7) + O\left(2^{\omega(\e_1\e_2)}2^{\omega(\e_1\e_3\e_4\e_5)}h_1(\e_2\e_6^2)
      \log(\e_2\e_6^2+1) (\e_2\e_6^2)^{1/2}\right).
$$
A straightforward application of partial summation therefore reveals
the total error as being 
\[
  \begin{split}
    &\ll \sum_{\ee,\e_7}
    2^{\omega(\e_1\e_2)}2^{\omega(\e_1\e_3\e_4\e_5)}h_1(\e_2\e_6^2)(\log
    B) (\e_2\e_6^2)^{1/2} \sup_{|\e_8|>\e_7} V_1^b(\ee,\e_7,\e_8;B)\\
    &\ll \sum_{\ee}
    2^{\omega(\e_1\e_2)}2^{\omega(\e_1\e_3\e_4\e_5)}h_1(\e_2\e_6^2)\frac{B\log
      B}{\sbase
      2{3/2}{3/2}{3/2}11}\\
    &\ll B(\log B)^5. 
  \end{split}
  \]
  Here, in the second step, we have used $\sbase 4 3 2 3 2 2 \e_7 \le B$ and
  $|\e_8|>\e_7$ and the bound (\ref{eq:estimate_V1}) for $V_1^b$. The
  final step uses Lemma \ref{lem:sum_h_k}.
\end{proof}

\subsection{Case $\e_7 \ge |\e_8|$}

We rewrite the result of Lemma~\ref{lem:d5_first_sum}.  Recall the definition
\eqref{eq:def_f} of the function $f_{a,b}$ for positive integers $a,b$.
Noting that we may replace (\ref{eq:cpe7}) by $\cp{\e_7}{\e_1\e_3\e_4}$, it
follows that
\begin{multline*}
  N_{U,H}^a(B) = \sum_{\substack{(\ee,\e_8) \in \ZZp^6 \times
      \ZZnz\\\text{(\ref{eq:cpe8}), (\ref{eq:cpe})}}} \sum_{\substack{k\mid
      \e_1\e_2\\\cp{k}{\e_3\e_4}}} \frac{\mu(k)
    \phis(\e_1\e_4\e_5)}{k\phis(\gcd(\e_1,k\e_2))}\\\times \sum_{\substack{1
      \le \rho \le k\e_2\e_6^2\\\cp{\rho}{k\e_2\e_6^2}}} A + O(B(\log B)^5)
\end{multline*}
where \[A = \sum_{\substack{\e_7 \in
    \ZZnz\\\congr{\e_4\e_7\e_8}{-\rho^2\e_3}{k\e_2\e_6^2}\\\e_7\ge|\e_8|}}
f_{\e_5,\e_1\e_3\e_4}(\e_7) V_1^a(\ee,\e_7,\e_8;B).
\]
Here we automatically have $\cp{\e_4\e_8}{k\e_2\e_6^2}$. Thus the congruence
involving $\rho$ in $A$ determines $\e_7$ uniquely
modulo $k\e_2\e_6^2$.

\begin{lemma}\label{lem:9}
  We have
  \[N_{U,H}^a(B) = \sum_{\substack{(\ee,\e_8) \in \ZZp^6\times
      \ZZnz\\\text{(\ref{eq:cpe8}), (\ref{eq:cpe})}}} \theta_2^a(\ee,\e_8)
  V_2^a(\ee,\e_8;B)+O(B(\log B)^5),\] where 
\[\theta_2^a(\ee,\e_7)=
\phis(\e_1\e_2\e_3\e_4\e_6) \phis(\e_1\e_2\e_4\e_5)
  \prod_{\substack{p\mid \e_1\\p\nmid \e_2\e_3\e_4}} \frac{1-2/p}{1-1/p}
  \prod_{p \nmid \e_1\e_2\e_3\e_4\e_5\e_6} \hspace{-0.3cm}(1-1/p^2).\]
\end{lemma}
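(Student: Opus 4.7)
I would follow the template of Lemma~\ref{lem:8}, but with Lemma~\ref{lem:sum_mod_q_2} taking the role of Lemma~\ref{lem:sum_mod_q} in order to accommodate the weight $f_{\e_5,\e_1\e_3\e_4}(\e_7)$ in the outer sum. Fix $(\ee,\e_8)$ and $k\mid\e_1\e_2$ with $\cp{k}{\e_3\e_4}$, and set $q=k\e_2\e_6^2$. The coprimality $\cp{\e_4\e_8}{q}$ lets me rewrite $\congr{\e_4\e_7\e_8}{-\rho^2\e_3}{q}$ as $\congr{\e_7}{\alpha\rho^2}{q}$ for a unique $\alpha\equiv-\e_3\overline{\e_4\e_8}\pmod{q}$ coprime to $q$, placing us in the setting of Lemma~\ref{lem:sum_mod_q_2} with $a=\e_5$ and $b=\e_1\e_3\e_4$.

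Next I would apply Abel summation (splitting the $\e_7$-range into subintervals just as in the proof of Lemma~\ref{lem:8}) to convert
\[
\sum_{\substack{\e_7\ge|\e_8|\\ \congr{\e_7}{\alpha\rho^2}{q}}} f_{\e_5,\e_1\e_3\e_4}(\e_7)\, V_1^a(\ee,\e_7,\e_8;B)
\]
into a main term proportional to $V_2^a(\ee,\e_8;B)=\int V_1^a(\ee,t_7,\e_8;B)\,\dd t_7$ plus an error term. The main term, after summing over $k$, should collapse to $\theta_2^a(\ee)V_2^a(\ee,\e_8;B)$ by a local multiplicative calculation at each prime. Specifically, the $k$-independent factor $\prod_{p\nmid\e_1\e_2\e_3\e_4\e_5\e_6 k}(1-1/p^2)$ supplied by Lemma~\ref{lem:sum_mod_q_2} matches the final factor of $\theta_2^a$, while the summation over $k\mid\e_1\e_2$ weighted by $\mu(k)\phis(\e_1\e_4\e_5)\phis(\e_1\e_3\e_4 q)/\bigl(k\phis(\gcd(\e_1,k\e_2))\bigr)$ produces the remaining factors; primes $p\mid\e_2$ contribute the pair $\phis(\e_1\e_2\e_3\e_4\e_6)\phis(\e_1\e_2\e_4\e_5)$, and primes $p\mid\e_1$ with $p\nmid\e_2\e_3\e_4$ contribute $(1-2/p)/(1-1/p)$ via the $\mu(k)/k$ cancellation.

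Finally, the error contributed by Lemma~\ref{lem:sum_mod_q_2} is $\ll 2^{\omega(\e_1\e_3\e_4)}(\log B)^2 h_1(q)\, q^{1/2}$ per $(\ee,\e_8,k)$; bounding $\sup_{\e_7}V_1^a(\ee,\e_7,\e_8;B)$ via \eqref{eq:estimate_V1} by $B^{1/2}/(\e_2^{1/2}\e_3^{1/2}\e_6|\e_8|^{1/2})$, summing over $|\e_8|\ge 1$ within the range dictated by the height condition $|\sbase{2}{1}{1}{2}{2}{0}\e_7^2\e_8|\le B$ together with $\e_7\ge|\e_8|$, and then summing over $\ee$ using Lemma~\ref{lem:sum_h_k}, should yield an $O(B(\log B)^5)$ bound in direct analogy with the end of the proof of Lemma~\ref{lem:8}. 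The hard part will be the local identification of $\theta_2^a$, in particular the factor $(1-2/p)/(1-1/p)$ at primes $p\mid\e_1$ outside the support of $\e_2\e_3\e_4$, which emerges from a delicate interplay between $\mu(k)/k$, $\phis(\gcd(\e_1,k\e_2))^{-1}$ and $\phis(\e_1\e_3\e_4 q)$; the error bookkeeping, by contrast, is essentially mechanical once the analog of Lemma~\ref{lem:8} is in hand.
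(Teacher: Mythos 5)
Your proposal follows the paper's proof very closely in structure: apply Lemma~\ref{lem:sum_mod_q_2} (rather than Lemma~\ref{lem:sum_mod_q}) because the weight $f_{\e_5,\e_1\e_3\e_4}(\e_7)$ is present, rewrite the congruence as $\congr{\e_7}{\alpha\rho^2}{q}$, pass to $V_2^a$ by partial summation, identify $\theta_2^a$ via the local product over $p$, and close out the error by bounding $V_1^a$ with \eqref{eq:estimate_V1} and summing over $\e_8$ then $\ee$. This is the paper's argument, and the local identification of $\theta_2^a$ that you flag as the delicate part is the one step the paper itself dismisses with ``a little thought.''

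The one concrete misstep is in the error bookkeeping, where you propose to truncate the $\e_8$-sum using the height condition $|\sbase{2}{1}{1}{2}{2}{0}\e_7^2\e_8|\le B$ together with $\e_7 \geq |\e_8|$. That combination only yields $|\e_8| \ll (B/\sbase 2 1 1 2 2 0)^{1/3}$, and after you multiply by $(\e_2\e_6^2)^{1/2}\cdot B^{1/2}/(\e_2^{1/2}\e_3^{1/2}\e_6|\e_8|^{1/2})$ and sum $|\e_8|^{-1/2}$ up to that cutoff, the $\ee$-denominator you obtain has exponents strictly less than $1$ in $\e_1,\dots,\e_5$ and exponent $0$ in $\e_6$, so the sum over $\ee$ is nowhere near $O((\log B)^c)$. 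The correct chain, mirroring Lemma~\ref{lem:8} exactly, is $\sbase 4 3 2 3 2 2 |\e_8| \leq \sbase 4 3 2 3 2 2 \e_7 \leq B$ (the first height condition combined with $\e_7 \geq |\e_8|$), which gives $|\e_8| \leq B/\sbase 4 3 2 3 2 2$, and then the $\e_8$-sum produces the factor $B/\sbase 2 {3/2} {3/2} {3/2} 1 1$ with all exponents $\geq 1$, after which Lemma~\ref{lem:sum_h_k} finishes. With that one substitution your outline is the paper's proof.
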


\begin{proof}
  Let $q=k\e_2\e_6^2$, $q'=\e_2\e_6^2$ and 
  \begin{multline*}
    N(t_1,t_2) = \sum_{\substack{k\mid \e_1\e_2\\\cp{k}{\e_3\e_4}}}
    \frac{\mu(k) \phis(\e_1\e_4\e_5)}{k\phis(\gcd(\e_1,k\e_2))}
    \\\times \sum_{\substack{1 \le \rho \le q\\\cp{\rho}{q}}} \sum_{\substack{t_1 <
        \e_7 \le t_2\\\congr{\e_4\e_7\e_8}{-\rho^2\e_3}{q}}}
    f_{\e_5,\e_1\e_3\e_4}(\e_7).
  \end{multline*}
  It  follows from Lemma~\ref{lem:sum_mod_q_2} that $N(t_1,t_2)$ is
  \begin{multline*}
    (t_2-t_1) 
    \sum_{\substack{k\mid \e_1\e_2\\\cp{k}{\e_3\e_4}}}
    \frac{\mu(k) \phis(\e_1\e_4\e_5)}{k\phis(\gcd(\e_1,k\e_2))}\cdot
    \phis(\e_1\e_3\e_4q) \prod_{p \nmid q\e_1\e_3\e_4\e_5} (1-1/p^2)\\
    + O\left(2^{\omega(\e_1\e_2)}2^{\omega(\e_1\e_3\e_4)}(\log
    B)h_1(q')\log(q'+1)q'^{1/2}\right).
  \end{multline*}

  A little thought reveals that the main term here is
  $(t_2-t_1)\theta_2^a(\ee, \e_7)$. Using partial summation, we estimate the
  total error as \[
  \begin{split}
    &\ll \sum_{\ee,\e_8} 2^{\omega(\e_1\e_2)}2^{\omega(\e_1\e_3\e_4)} (\log
    B)^2 h_1(\e_2\e_6^2)(\e_2\e_6^2)^{1/2}
    \sup_{\e_7\ge|\e_8|} V_1^a(\ee,\e_7,\e_8;B)\\
    &\ll B(\log B)^2 \sum_{\ee}
    \frac{2^{\omega(\e_1\e_2)}2^{\omega(\e_1\e_3\e_4)}
      h_1(\e_2\e_6^2)}{\sbase 2{3/2}{3/2}{3/2}11}\\
    &\ll B(\log B)^5,
  \end{split}
  \]
  using $\sbase 432322|\e_8| \le \sbase 4 3 2 3 2 2 \e_7 \le B$ and
  (\ref{eq:estimate_V1}) in the second step and Lemma \ref{lem:sum_mod_q} in the
  final step.
\end{proof}

\section{Third summation}

Throughout the remainder of the paper we set $E=B(\log B)^5(\log \log B)$ for
the total error term that appears in our main result.  In this section and the
next we will need to compute the average order of certain complicated
multi-variable arithmetic functions, sometimes weighted by piecewise
continuous functions.  As previously, we will place ourselves in the more
general investigation carried out in \cite{manin_auxiliary}. Here, given $r\in
\ZZp$ and $C\in \RR_{\geq 1}$, a number of rather general sets of functions
are introduced: $\Theta_{1,r}(C,\e_r)$ \cite[Definition~3.8]{manin_auxiliary},
$\Theta_{2,r}(C)$ \cite[Definition~4.2]{manin_auxiliary}, $\Theta_{3,r}'$
\cite[Definition~7.7]{manin_auxiliary} and $\Theta_{4,r}'(C)$
\cite[Definition~7.8]{manin_auxiliary}.  We will not redefine these sets here,
but content ourselves with recording the inclusions
$$
\Theta_{3,r}' \supset \Theta_{4,r}'(C) \subset \Theta_{1,r}(48rC^2,\e_r) \cap
\Theta_{2,r}(48r(3^rC)^2)
$$
\cite[Corollary~7.9]{manin_auxiliary}.

In the notation of \cite[Definition~7.7]{manin_auxiliary}, our
manipulations will involve the function
\begin{equation}\label{eq:t3}
  \theta_3(\ee) = 
  \prod_p \theta_{3,p}(I_p(\ee)) \in \Theta_{3,6}'
\end{equation}
for any $\ee\in \ZZp^6$, where $I_p(\ee)=\{i\in \{1,\ldots,6\}:p\mid \eta_i\}$
and
\begin{equation*}
  \theta_{3,p}(I)=
  \begin{cases}
    (1-\frac 1 {p^2}), & I= \emptyset,\\
    (1-\frac 1 p)^2(1-\frac 2 p), &I = \{1\},\\
    (1-\frac 1 p)^3, &I =
    \{2\},\{4\},\{1,2\},\{1,3\},\{1,4\},\{2,6\},\{4,5\},\\
    (1-\frac 1 p)^2, &I = \{3\},\{5\},\{6\},\\
    0, &\text{all other $I \subset \{1, \dots, 6\}$.}
  \end{cases}
\end{equation*}

\subsection{Case $|\e_8| > \e_7$}

\begin{lemma}\label{lem:d5_third_sum_b}
  We have
  \begin{equation*}
    N_{U,H}^b(B) = 
    \sum_{\ee \in \ZZp^6}
    \theta_3(\ee) V_3^b(\ee;B) + O(E)
  \end{equation*}
  where $\theta_3$ is given by \eqref{eq:t3}.
\end{lemma}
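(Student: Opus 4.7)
The plan is to take the expression for $N_{U,H}^b(B)$ given by Lemma~\ref{lem:8} and carry out the summation over $\e_7$, replacing $\sum_{\e_7} \theta_2^b(\ee,\e_7) V_2^b(\ee,\e_7;B)$ by its main term $\theta_3(\ee) V_3^b(\ee;B)$ plus an acceptable error. For fixed $\ee \in \ZZp^6$, the function $\e_7 \mapsto \theta_2^b(\ee,\e_7)$ is multiplicative in $\e_7$ with local factors depending only on $I_p(\ee)$ and on whether $p \mid \e_7$, so (after removing the coprimality condition $\cp{\e_7}{\e_1\e_2\e_3\e_4\e_6}$ by M\"obius inversion) it fits into one of the classes $\Theta_{3,r}'$ or $\Theta_{4,r}'(C)$ introduced in \cite{manin_auxiliary}. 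I would therefore invoke the appropriate general summation result from that reference—applicable to a one-variable multiplicative weight paired with a piecewise monotonic function like $V_2^b(\ee,\cdot;B)$—to obtain the expected main term together with an explicit pointwise error.

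To verify that the main term is $\theta_3(\ee) V_3^b(\ee;B)$, I would compute, for each prime $p$, the local average of the factor $\phis(\e_1\e_2\e_4\e_5\e_7)$ over $\e_7$ subject to $\cp{\e_7}{\e_1\e_2\e_3\e_4\e_6}$, and combine it with the factors of $\theta_2^b$ that are independent of $\e_7$. Matching this case-by-case against the definition of $\theta_{3,p}(I_p(\ee))$ in \eqref{eq:t3} recovers the asserted expression; in particular the factor $(1 - 1/p^2)$ at primes not dividing any $\e_i$ arises precisely from averaging $\phis(\e_7)$ over unrestricted $\e_7$, and the shift $(1-2/p)/(1-1/p)$ at primes dividing $\e_1$ but not $\e_2\e_3\e_4$ accounts for the extra $\phis$-factor already present in $\theta_2^b$. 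For the error, the estimate \eqref{eq:estimate_V2b} provides the pointwise control on $V_2^b$, and partial summation in $\e_7$ reduces matters to bounding partial sums of $\theta_2^b$; these partial sums are in turn controlled via Lemma~\ref{lem:sum_h_k} applied to the $h_k$-type weights that naturally appear.

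The main obstacle will be extracting the precise $\log \log B$ savings in $E = B(\log B)^5 (\log \log B)$: a straightforward partial summation combined with the trivial logarithmic bound on partial sums of $\theta_2^b$ would produce an extra factor of $\log B$, yielding only $O(B(\log B)^6)$. Recovering the sharper estimate requires exploiting the sharp $O(Q \log Q)$ average order of $h_k$ in Lemma~\ref{lem:sum_h_k} (in place of the cruder $O_\ve(Q^{1+\ve})$ used elsewhere), together with a careful split of the summation range according to whether $\sbase{4}{3}{2}{3}{2}{2}\e_7$ is small or of size $B$, so that the dominant contribution comes from the regime in which $V_2^b$ is at its maximal size.
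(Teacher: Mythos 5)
Your proposal takes essentially the same route as the paper: invoke the general summation machinery from \cite{manin_auxiliary}, verify that the multiplicative weight (with the coprimality conditions built in) belongs to the appropriate $\Theta$-class, control $V_2^b$ via \eqref{eq:estimate_V2b}, and match the local averages against $\theta_3$. The paper applies \cite[Proposition~3.9]{manin_auxiliary} with $(r,s)=(5,1)$, defines $\theta(\ee,\e_7)$ to be $\theta_2^b(\ee,\e_7)$ when \eqref{eq:cpe7}, \eqref{eq:cpe} hold and $0$ otherwise, checks $\theta\in\Theta_{4,7}'(3)\subset\Theta_{1,7}(C,\e_7)$, records that $V_2^b(\ee,\e_7;B)\ll B\sbase{1}{1}{1}{1}{1}{1}^{-1}\e_7^{-1}\bigl(B\sbase{4}{3}{2}{3}{2}{2}^{-1}\e_7^{-1}\bigr)^{-1/4}$ and vanishes unless $\sbase{4}{3}{2}{3}{2}{2}\e_7\le B$, and then reads off the main term $\mathcal{A}(\theta(\ee,\e_7),\e_7)\int_1^B V_2^b\,\dd t_7=\theta_3(\ee)V_3^b(\ee;B)$ via \cite[Corollary~7.10]{manin_auxiliary}.

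Where your write-up drifts is in the closing paragraph: the worry about a naive partial summation producing an extra factor of $\log B$, and the proposed remedy of splitting the $\e_7$-range, are unnecessary. The entire point of \cite[Proposition~3.9]{manin_auxiliary} is that, once the hypotheses on the $\Theta$-class membership and on the shape of $V_2^b$ (the bound above, together with the vanishing condition) are verified, it delivers the error $O(E)=O(B(\log B)^5\log\log B)$ directly; you do not need to re-derive the $\log\log B$ saving by hand or appeal separately to Lemma~\ref{lem:sum_h_k} at this stage. So your plan is correct, but slightly over-engineered: verify the hypotheses, cite the proposition, and compute $\mathcal{A}$ with \cite[Corollary~7.10]{manin_auxiliary}.
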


\begin{proof}
  Our proof of the lemma is based on combining
  \cite[Proposition~3.9]{manin_auxiliary} with Lemma \ref{lem:8}.  We will
  apply the former to $\theta(\ee,\e_7)V_2^b(\ee,\e_7;B)$ summed over $\e_7
  \ge 1$, with $(r,s)=(5,1)$ and
  \begin{equation*}
    \theta(\ee,\e_7)=
    \begin{cases}
      \theta_2^b(\ee,\e_7), &\text{if (\ref{eq:cpe7}), (\ref{eq:cpe}) hold,}\\
      0, &\text{otherwise.}
    \end{cases}
  \end{equation*}
  
  There are a number of preliminary hypotheses that need to be checked in
  using \cite[Proposition~3.9]{manin_auxiliary}.  Local factors of
  $\theta=\prod_p \theta_p(I_p(\ee,\e_7)) \in \Theta_{3,7}'$ are given by
  $\theta_p(I)$, equal to
  \begin{equation*}
    \begin{cases}
      1, & I= \emptyset,\\
      (1-\frac 1 p)(1-\frac 2 p), &I = \{1\},\\
      (1-\frac 1 p)^2, &I =
      \{2\},\{4\},\{5\},\{1,2\},\{1,3\},\{1,4\},\{2,6\},\{4,5\},\{5,7\}\\
      1-\frac 1 p, &I = \{3\},\{6\},\{7\},\\
      0, &\text{all other $I \subset \{1, \dots, 7\}$.}
    \end{cases}
  \end{equation*}
  We see that $\theta \in \Theta_{4,7}'(3)\subset
  \Theta_{1,7}(C, \e_7)$, for an
  appropriate $C \in \ZZp$.

  For $V_2^b$, we observe that (\ref{eq:estimate_V2b})
  implies \[V_2^b(\ee,\e_7;B) \ll \frac{B}{\sbase 111111 \e_7} \cdot
  \left(\frac{B}{\sbase 432322 \e_7}\right)^{-1/4}\] and that
  $V_2^b(\ee,\e_7;B) = 0$ unless $\sbase 432322 \e_7 \le B$.

  Thus everything is in place for an application of
  \cite[Proposition~3.9]{manin_auxiliary}, giving
  \begin{align*}
    \sum_{\e_7\ge 1}\theta(\ee,\e_7)V_2^b(\ee,\e_7;B) &=
    \mathcal{A}(\theta(\ee,\e_7),\e_7)
    \int_1^B V_2^b(\ee,t_7;B) \dd t_7+O(E)\\
    &= \theta_3(\ee) V_3^b(\ee;B)+O(E)
    ,
  \end{align*} 
  where we check $\mathcal{A}(\theta(\ee,\e_7),\e_7) = \theta_3(\ee)$ by
  \cite[Corollary~7.10]{manin_auxiliary}.
\end{proof}

\subsection{Case $\e_7 \ge |\e_8|$}

\begin{lemma}\label{lem:d5_third_sum_a}
  We have
  \begin{equation*}
    N_{U,H}^a(B) = 
    \sum_{\ee \in \ZZp^6}
    \theta_3(\ee) V_3^a(\ee;B) + O(E)
  \end{equation*}
  with $\theta_3$ given by \eqref{eq:t3}.
\end{lemma}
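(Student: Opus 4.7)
The proof will follow the template established in the proof of Lemma~\ref{lem:d5_third_sum_b}, starting from the formula furnished by Lemma~\ref{lem:9}. The aim is to apply \cite[Proposition~3.9]{manin_auxiliary} to
\[
\sum_{\e_8\in\ZZnz} \theta(\ee,\e_8)V_2^a(\ee,\e_8;B),
\]
where
\[
\theta(\ee,\e_8) =
\begin{cases}
\theta_2^a(\ee), & \text{if \eqref{eq:cpe8} and \eqref{eq:cpe} hold,}\\
0, & \text{otherwise.}
\end{cases}
\]
Splitting the range into $\e_8 > 0$ and $\e_8 < 0$ and applying the proposition separately to each piece reduces matters to positive summation variables. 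A minor simplification relative to the case treated in Lemma~\ref{lem:d5_third_sum_b} is that $\theta_2^a$ carries no explicit dependence on $\e_8$; all $\e_8$-dependence of $\theta$ is concentrated in the coprimality indicator \eqref{eq:cpe8}.

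The preliminary hypotheses fall out much as before. Writing $\theta = \prod_p \theta_p(I_p(\ee,\e_8))$ with index~$7$ assigned to $\e_8$, one sees that $\theta_p(I)$ vanishes whenever $7 \in I$ and $I \cap \{1,\dots,6\} \ne \emptyset$, while otherwise taking explicit values of the form $(1-1/p)^a(1-2/p)^b(1-1/p^2)^c$ read off from the formula for $\theta_2^a$ in Lemma~\ref{lem:9}. A short case analysis of the divisibility patterns allowed by \eqref{eq:cpe} shows that $\theta \in \Theta_{4,7}'(C)$ for a suitable $C \in \ZZp$, and thus $\theta \in \Theta_{1,7}(C',\e_8) \cap \Theta_{2,7}(C'')$. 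The control on $V_2^a$ needed by the proposition is a direct consequence of \eqref{eq:estimate_V2a}: rewriting the first bound as
\[
V_2^a(\ee,\e_8;B) \ll \frac{B}{\sbase 1 1 1 1 1 1 |\e_8|}\cdot\left(\frac{B|\e_8|}{\sbase 6 5 3 4 2 4}\right)^{-1/6}
\]
and combining with the support condition implicit in the second bound of \eqref{eq:estimate_V2a} furnishes exactly the polynomial decay in $|\e_8|$ that the proposition requires.

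With these inputs, \cite[Proposition~3.9]{manin_auxiliary} yields
\[
\sum_{\e_8\in\ZZnz}\theta(\ee,\e_8)V_2^a(\ee,\e_8;B) = \mathcal{A}(\theta(\ee,\e_8),\e_8)\,V_3^a(\ee;B) + O(E'),
\]
where the error contribution $O(E')$, after summation over $\ee \in \ZZp^6$, amounts to $O(E)$. The main obstacle, and the only non-routine part of the argument, is to verify that $\mathcal{A}(\theta,\e_8) = \theta_3(\ee)$. This reduces via \cite[Corollary~7.10]{manin_auxiliary} to a prime-by-prime check: averaging over $\e_8$ multiplies the local factor at $p$ by an additional $(1-1/p)$ whenever $p$ divides some $\e_i$ with $i \in \{1,\dots,6\}$, reflecting the local density of integers coprime to $\e_1\cdots\e_6$, and one verifies that the resulting factors match $\theta_{3,p}(I)$ as recorded in \eqref{eq:t3}. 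Summing the main term over $\ee \in \ZZp^6$ then yields the claim.
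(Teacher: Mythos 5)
Your overall architecture matches the paper's: start from Lemma~\ref{lem:9}, define $\theta(\ee,\e_8)$ as $\theta_2^a$ dressed by the coprimality indicator, verify membership in the $\Theta$-classes, feed the decay estimate from~\eqref{eq:estimate_V2a} into the averaging machine, and identify the resulting local product with $\theta_3$ via \cite[Corollary~7.10]{manin_auxiliary}. You are also right that the dependence of $\theta_2^a$ on the eighth variable is vacuous (the expression involves only $\e_1,\dots,\e_6$), which is a genuine and useful observation.

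Where you diverge from the paper is in the choice of auxiliary proposition. The summation variable here is $\e_8\in\ZZnz$, not $\e_8\in\ZZp$. The paper applies \cite[Proposition~3.10]{manin_auxiliary} with $r=5$, which is the version of the first-variable-averaging lemma built precisely for a summation variable ranging over nonzero integers; you instead propose to apply \cite[Proposition~3.9]{manin_auxiliary} twice after splitting into $\e_8>0$ and $\e_8<0$. This is not obviously a cosmetic change: the height function $h(\ee',\al_1;B)$ is not even in $\e_8$ (the term $\e_3\e_8\al_1^2$ in the last coordinate changes sign), so $V_2^a(\ee,\e_8;B)$ and $V_2^a(\ee,-\e_8;B)$ are different functions, and on each half you would have to re-verify from scratch the regularity hypotheses that \cite[Proposition~3.9]{manin_auxiliary} places on the weight; the two main terms then recombine into $V_3^a(\ee;B)=\int_{t_8}V_2^a(\ee,t_8;B)\dd t_8$ with the integral running over both signs. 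Proposition~3.10 packages exactly this bookkeeping, and the paper invokes it to avoid the split. Your route can probably be made to work, but it is genuinely different and demands an extra verification step that you have glossed over.

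A smaller point: you describe the second bound in~\eqref{eq:estimate_V2a} as furnishing a \emph{support condition}. It does not restrict the support of $V_2^a$; it is simply a complementary upper bound, useful because the first term is poor when $|\e_8|$ is small. The paper's reformulation as
\[
V_2^a(\ee,\e_8;B)\ll\frac{B}{\sbase 111111 |\e_8|}\min\left\{\left(\frac{B}{\sbase 653424|\e_8|^{-1}}\right)^{-1/6},\ \frac{B}{\sbase 653424|\e_8|^{-1}}\right\}
\]
makes it clear that both regimes (small and large $\sbase 653424|\e_8|^{-1}/B$) are controlled, and both are needed for the hypotheses of the averaging proposition.
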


\begin{proof}
  This time our argument is based on combining
  \cite[Proposition~3.10]{manin_auxiliary} with Lemma \ref{lem:9}, the former
  being applied with $r=5$. As previously, there are a number of preliminary
  hypotheses that need to be checked in order to use this result. For the
  first of these, we define
  \begin{equation*}
    \theta(\ee,\e_8)=
    \begin{cases}
      \theta_2^a(\ee,\e_8), &\text{if (\ref{eq:cpe8}), (\ref{eq:cpe}) hold,}\\
      0, &\text{otherwise.}
    \end{cases}
  \end{equation*}
  As in the proof of Lemma~\ref{lem:d5_third_sum_b}, we have $\theta \in
  \Theta_{1,7}(C,\e_8)$,  for some $C \in \ZZp$.
  
  Next, (\ref{eq:estimate_V2a}) implies that
  \begin{multline*}
    V_2^a(\ee,\e_8;B)\\ \ll \frac{B}{\sbase 1 1 1 1 1 1 |\e_8|}
    \min\left\{\left(\frac{B}{\sbase 6 5 3 4 2 4
          |\e_8|^{-1}}\right)^{-1/6},\frac{B}{\sbase 6 5 3 4 2 4
        |\e_8|^{-1}}\right\}.
  \end{multline*}
  An application of \cite[Proposition~3.10]{manin_auxiliary} now gives
  the expected main term, together with a total error term
  $O(E)$. 
\end{proof}

\section{Completion of the proof}

We put back together our estimates for $N_{U,H}^b(B)$ and $N_{U,H}^a(B)$ that
were obtained in Lemmas~\ref{lem:d5_third_sum_b} and \ref{lem:d5_third_sum_a},
respectively. This yields the following result.

\begin{lemma}
  We have \[N_{U,H}(B) = 
  \sum_{\ee \in \ZZp^6}
  \theta_3(\ee) V_3(\ee;B) + O(E),\] with
  $\theta_3$ given by \eqref{eq:t3}.
\end{lemma}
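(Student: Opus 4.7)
The plan is to simply combine the two previous lemmas, since the lemma under consideration is nothing more than the sum of Lemmas \ref{lem:d5_third_sum_b} and \ref{lem:d5_third_sum_a}. Indeed, by construction we have the disjoint decomposition $\TT(B) = \TT^a(B) \sqcup \TT^b(B)$ according to whether $|\e_8| \le \e_7$ or $|\e_8| > \e_7$, so that
\[
N_{U,H}(B) = N_{U,H}^a(B) + N_{U,H}^b(B).
\]
On the torsor volume side, the definition of $V_3(\ee;B)$ in \eqref{eq:estimate_V3} immediately gives $V_3(\ee;B) = V_3^a(\ee;B) + V_3^b(\ee;B)$.

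The first step is therefore to invoke Lemmas \ref{lem:d5_third_sum_b} and \ref{lem:d5_third_sum_a} to write
\[
N_{U,H}(B) = \sum_{\ee \in \ZZp^6} \theta_3(\ee) \bigl( V_3^a(\ee;B) + V_3^b(\ee;B) \bigr) + O(E),
\]
where the two error terms are both bounded by $E = B(\log B)^5 (\log\log B)$, so their sum is absorbed into a single such error term. The second and final step is to collapse $V_3^a + V_3^b$ into $V_3$ using \eqref{eq:estimate_V3}.

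There is essentially no obstacle here; the crucial work has already been carried out in Lemmas \ref{lem:d5_third_sum_b} and \ref{lem:d5_third_sum_a}, where the same arithmetic function $\theta_3(\ee)$ was obtained in both cases, which is precisely what makes this simple recombination possible. The matching of the two local arithmetic factors is not accidental: both came from applying the general results of \cite{manin_auxiliary} to functions $\theta_2^a, \theta_2^b$ whose local behaviour at each prime is dictated by the same underlying torsor equation and coprimality conditions, and the averaging procedure erases the distinction between the two cases.
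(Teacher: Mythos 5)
Your proof is correct and matches the paper's argument exactly: the paper likewise obtains this lemma by simply adding the conclusions of Lemmas~\ref{lem:d5_third_sum_b} and \ref{lem:d5_third_sum_a}, using $N_{U,H}(B)=N_{U,H}^a(B)+N_{U,H}^b(B)$ and $V_3=V_3^a+V_3^b$. Nothing more is needed.
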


It remains to handle the summation over $\e_1, \dots, \e_6$. This is achieved
in the next result.

\begin{lemma}
  \[N_{U,H}(B) = \left(\prod_p \omega_p\right) \int_{(\ee',\al_1) \in \R(B)}
  \frac{1}{\e_2\e_6^2} \dd \ee' \dd \al_1 + O(E).\]
\end{lemma}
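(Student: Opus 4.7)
The previous lemma reduces the problem to replacing the sum $\sum_{\ee \in \ZZp^6} \theta_3(\ee) V_3(\ee;B)$ by the claimed integral. The strategy is to apply the iterated summation machinery of \cite{manin_auxiliary} to all six variables at once, then to identify the resulting Euler product with $\prod_p \omega_p$ and the resulting integral with the one in the statement.

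First, I would appeal to an iterated version of \cite[Proposition~3.10]{manin_auxiliary}: since $\theta_3\in\Theta_{3,6}'$ by \eqref{eq:t3}, and inspection of the local factors shows that $\theta_3\in\Theta_{4,6}'(C)$ for a suitable absolute constant $C$, the required hypotheses are in force. Combined with the pointwise bound $V_3(\ee;B)\ll B\,\sbase{1}{1}{1}{1}{1}{1}^{-1}$ from \eqref{eq:estimate_V3} and the support constraint $\sbase{4}{3}{2}{3}{2}{2}\le B$, this should yield
\[
\sum_{\ee\in\ZZp^6} \theta_3(\ee) V_3(\ee;B) = \mathcal{A}(\theta_3,\ee)\int V_3(\ee;B)\dd\ee + O(E),
\]
where the integral is over $\ee\in\RR_{\ge 1}^6$, and $\mathcal{A}(\theta_3,\ee) = \prod_p \omega_p^*$ factors as an Euler product via \cite[Corollary~7.10]{manin_auxiliary}, with local factor $\omega_p^* = \sum_{I\subset\{1,\dots,6\}} \theta_{3,p}(I)\,p^{-|I|}(1-1/p)^{6-|I|}$.

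Second, I would verify that $\omega_p^* = \omega_p$ by direct calculation: collecting the twelve non-zero contributions to $\omega_p^*$ according to the definition of $\theta_{3,p}$ and factoring out $(1-1/p)^7$ leaves the bracket
\[
(1+1/p)+(1/p)(1-2/p)+2(1/p)(1-1/p)+3(1/p)+5(1/p^2) = 1+7/p+1/p^2,
\]
matching $\omega_p=(1-1/p)^7(1+7/p+1/p^2)$. Third, I would unfold $\int V_3(\ee;B)\dd\ee$: by construction $V_3=V_3^a+V_3^b$ is an iterated integral of $V_1^a+V_1^b$ against $\dd\e_7\dd\e_8$, and $V_1^a, V_1^b$ are themselves integrals of $(\e_2\e_6^2)^{-1}$ over an $\al_1$-interval. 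The two regimes $\e_7\ge|\e_8|$ and $|\e_8|>\e_7$ together cover $\R(B)$ exactly once, so Fubini delivers the integral over $\R(B)$ stated in the lemma.

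The main obstacle lies in the first step: ensuring that six successive passages from a sum to an integral produce a combined error of only $O(E)=O(B(\log B)^5\log\log B)$ rather than the naive $O(B(\log B)^6)$. This requires invoking \cite[Proposition~3.10]{manin_auxiliary} in a form that saves an extra logarithm at the outermost summation, exactly as was done in Lemmas~\ref{lem:d5_third_sum_a} and~\ref{lem:d5_third_sum_b}; everything else is either formal unfolding or a bounded local computation.
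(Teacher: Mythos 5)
Your overall plan — convert the sum over $\ee$ into a product of local densities times an integral of $V_3$, verify that the local densities reproduce $\omega_p$, and unfold $\int V_3\dd\ee$ into the integral over $\R(B)$ — is exactly the shape of the paper's argument, and your Euler-product computation matches the paper's line for line (the bracket indeed collapses to $1+7/p+1/p^2$).

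Where you diverge is in the choice of summation tool, and this is precisely the place where your own stated worry lives. You propose to iterate \cite[Proposition~3.10]{manin_auxiliary} one variable at a time, and you correctly note that a naive iteration over six variables would leak an extra logarithm and give $O(B(\log B)^6)$ rather than $O(E)$. The paper does not iterate: it uses the membership $\theta_3 \in \Theta_{4,6}'(4) \subset \Theta_{2,6}(C)$ (one of the inclusions recorded before \eqref{eq:t3}) to invoke \cite[Proposition~4.3]{manin_auxiliary} once, with $(r,s)=(6,0)$. That proposition is designed precisely to sum a $\Theta_{2,r}$-class function over \emph{all} remaining variables simultaneously, against a function obeying the kind of pointwise bound \eqref{eq:estimate_V3} supplies, with a single error $O(E)$. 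So the issue you flag as "the main obstacle" is not something to be finessed by iterating the single-variable lemma more cleverly; it is what Proposition~4.3 was built for, and the hypothesis you need is $\theta_3\in\Theta_{2,6}(C)$, not $\Theta_{1,6}$. Swapping in that result, everything else in your proposal (the identification $\mathcal{A}(\theta_3)=\prod_p\omega_p$ via \cite[Corollary~7.10]{manin_auxiliary}, and the Fubini unfolding of $V_3=V_3^a+V_3^b$ through $V_1^a,V_1^b$ back to $\R(B)$) goes through as you describe.
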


\begin{proof}
  Since $\theta_3 \in \Theta_{4,r}'(4)$, there is a $C \in \ZZp$ such that
  $\theta_3 \in \Theta_{2,r}(C)$.
  This and the bound (\ref{eq:estimate_V3}) for
  $V_3(\ee;B)$ show that we are able to apply
  \cite[Proposition~4.3]{manin_auxiliary} with $(r,s)=(6,0)$ to conclude that
  \begin{equation*}
    \sum_{\ee} 
    \theta_3(\ee)
    V_3(\ee;B) = \theta_0 V_0(B) + O(E).
  \end{equation*}
  Here,
  \begin{equation*}
    V_0(B) = \int_{\ee} V_3(\ee;B) \dd \ee = \int_{\R(B)} \frac{1}{\e_2\e_6^2}
    \dd \ee' \dd \al_1
  \end{equation*}
  and $\theta_0$ is the ``average'' of $\theta(\ee)$ over $\e_1, \dots,
  \e_6$, which is computed 
  as
  \begin{equation*}
    \begin{split}
      \theta_0 &= \prod_p \left(1-\frac 1 p\right)^7 \left(\left(1+\frac 1
          p\right) + \left(\frac 1 p - \frac 2 {p^2}\right) + 2\left(\frac 1 p
          - \frac 1{p^2}\right)+3 \frac 1 p+5 \frac 1 {p^2}\right)\\ & =
      \prod_p \left(1-\frac 1 p\right)^7 \left(1 + \frac 7 p + \frac
        1{p^2}\right)\\
      &= \prod_p \omega_p
    \end{split}
  \end{equation*}
  using \cite[Corollary~7.10]{manin_auxiliary}.
\end{proof}

The subsequent task is to modify the domain of integration, replacing $\R(B)$
by $\R'(B)$. This is the final step needed to extract the main term as it
appears in the statement of the theorem.

\begin{lemma}\label{lem:final_step}
  \[N_{U,H}(B) = \left(\prod_p \omega_p\right) \int_{(\ee',\al_1) \in \R'(B)}
  \frac{1}{\e_2\e_6^2} \dd \ee' \dd \al_1 + O(E).\]
\end{lemma}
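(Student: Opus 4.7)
My plan is to show that replacing the integration domain $\R(B)$ by $\R'(B)$ changes the integral by at most $O(B(\log B)^5)$, which after multiplication by the bounded constant $\prod_p \omega_p$ lies within the allowed error $E$.

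The first observation is that the condition $\sbase 4 3 2 3 2 2 \le B$ built into $\R'_1(B)$ is automatic inside $\R(B)$: $h(\ee',\al_1;B) \le 1$ together with $\e_7 \ge 1$ forces $\sbase 4 3 2 3 2 2 \le \sbase 4 3 2 3 2 2 \e_7 \le B$. Consequently the symmetric difference $\R(B) \triangle \R'(B)$ breaks into three pieces: (a) points of $\R'(B)$ with $0 \le \e_7 < 1$; (b) points of $\R'(B)$ with $\e_7 \ge 1$ and $|\e_8| < 1$; and (c) points of $\R(B)$ with $\sbase 6 5 3 4 2 4 < B$. Bounding the integral over each piece by $O(B(\log B)^5)$ will finish the proof.

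For (a) and (b), one of the variables $\e_7$ or $\e_8$ is confined to an interval of length $O(1)$. After integrating $\al_1$ out and using the estimates \eqref{eq:estimate_V1}, \eqref{eq:estimate_V2a}, \eqref{eq:estimate_V2b} to bound the remaining two-variable integration (checking that these extend to the small-$\e_7$ or small-$\e_8$ range), one is left with an integral of a function of the form $\prod_i \e_i^{-a_i}$ over $\ee \in \R'_1(B)$. I would pass to logarithmic coordinates $\e_i = e^{u_i}$ and check that the resulting integrand is $e^{l(u)}$ with $l(u) \le (\log B)/4$ attained on the facet $\sum c_i u_i = \log B$ of the slab defined by $\R'_1(B)$, where $c=(4,3,2,3,2,2)$; a co-area decomposition along this $5$-dimensional facet then produces the desired factor $(\log B)^5$. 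For (c), the condition $\sbase 6 5 3 4 2 4 < B$ ensures that the first alternative of the minimum in \eqref{eq:estimate_V2a} always dominates, so $V_3^a \ll B^{5/6}/\sbase 0 {1/6}{1/2}{1/3}{2/3}{1/3}$, and a parallel calculation sharpens $V_3^b$. In log coordinates the integrand against $d\ee$ factors as $\exp\bigl((1/6)\sum d_i u_i\bigr)$ with $d=(6,5,3,4,2,4)$, so the constraint $\sum d_i u_i < \log B$ caps the exponent at $(\log B)/6$; the same co-area argument then yields $B(\log B)^5$ from $V_3^a$ and only $O(B)$ from $V_3^b$ (whose linear form attains its maximum at a single vertex rather than along a face).

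The main obstacle is the polytope bookkeeping: in each of the three cases one must verify that the linear form being exponentiated attains its maximum on a face of $\R'_1(B)$ of dimension exactly $5$, so that the co-area formula yields $(\log B)^5$ rather than the fatal $(\log B)^6$ that the crude bound $V_3 \ll B/\sbase 1 1 1 1 1 1$ would give. This structural fact reflects the shape of the effective cone of $\tS$ encoded in the formula \eqref{eq:alpha_volume} for $\alpha(\tS)$, and is precisely why postponing the transition $\R(B) \to \R'(B)$ to the end of the argument is a viable strategy.
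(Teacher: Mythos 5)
Your decomposition of the symmetric difference into the three pieces (a), (b), (c) and your use of the estimates \eqref{eq:estimate_V1}--\eqref{eq:estimate_V2b} is, in substance, the route the paper takes (it telescopes through four incremental modifications $\R_0,\dots,\R_4$, but these collapse to your three pieces once one notes that $\sbase{4}{3}{2}{3}{2}{2}\le B$ is automatic in $\R(B)$). The logarithmic/co-area phrasing is a repackaging of the paper's direct variable-by-variable integration; each paper estimate of the form $\int B^{\theta}/\prod\e_i^{a_i}\,\dd\ee \ll B(\log B)^5$ is exactly your observation that $a_i/c_i$ (respectively $a_i/d_i$) is a common constant, so the exponentiated linear form maximises along the whole 5-dimensional facet.

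There is, however, a genuine gap in piece (c). You correctly observe that in the range $\sbase{6}{5}{3}{4}{2}{4}<B$ the first alternative of the minimum in \eqref{eq:estimate_V2a} dominates, giving $V_3^a\ll B^{5/6}/\sbase{0}{1/6}{1/2}{1/3}{2/3}{1/3}$ and hence a contribution $O(B(\log B)^5)$. But for $V_3^b$ you only assert that ``a parallel calculation sharpens $V_3^b$'' and that its linear form ``attains its maximum at a single vertex,'' giving $O(B)$. Neither claim is substantiated, and the only bound actually available for $V_3^b$ via the route $\e_8\mapsto\e_7$ (i.e.\ integrating \eqref{eq:estimate_V2b} over $\e_7\le B/\sbase{4}{3}{2}{3}{2}{2}$) is the generic bound $V_3^b\ll B/\sbase{1}{1}{1}{1}{1}{1}$; integrating \emph{this} over $\{\sbase{6}{5}{3}{4}{2}{4}<B,\ \e_i\ge1\}$ produces $B(\log B)^6$, which is fatal. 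The paper avoids this by not splitting into $V_3^a,V_3^b$ at all in this case: it integrates $\al_1$ and then $\e_7$ for \emph{both} branches, using a variant of \eqref{eq:estimate_V2a} (this works because \cite[Lemma~5.1]{manin_auxiliary} depends only on the height inequality $h\le1$, not on whether $\e_7\ge|\e_8|$ or $|\e_8|>\e_7$), and only afterwards integrates $\e_8$. You should do the same, or else explicitly derive the sharpened $\e_8$-then-$\e_7$ bound you are implicitly invoking; as stated the argument for (c) does not close.

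One further small imprecision: when you say $l(u)\le(\log B)/4$ you presumably mean the part of the exponent coming from the $\e$-integration, not the full linear form (which maximises at $\log B$, giving the factor $B$); worth saying explicitly, since otherwise a reader would infer $O(B^{1/4}(\log B)^5)$.
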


\begin{proof}
  Let 
\[
V^{(i)}(B)=\int_{h(\ee',\al_1;B) \le 1,\ (\ee',\al_1) \in \R_i(B)} 
  (\e_2\e_6^2)^{-1} \dd \ee' \dd \al_1,
\]
  where
  \begin{equation*}
    \begin{split}
      \R_0(B)&=\{(\ee',\al_1) \in \RR^9 \mid \e_1, \dots, \e_7,|\e_8|\ge 1\}\\
      \R_1(B)&=\{(\ee',\al_1) \in \RR^9 \mid \e_1, \dots, \e_7,|\e_8|\ge 1,\
      \sbase 4 3 2 3 2 2 \le B\}\\
      \R_2(B)&=\left\{(\ee',\al_1) \in \RR^9 \midd
        \begin{aligned}
          &\e_1, \dots, \e_7,|\e_8|\ge 1,\\ 
          &\sbase 4 3 2 3 2 2 \le B,\ \sbase 6 5 3 4 2 4 \ge B
        \end{aligned}
      \right\}\\
      \R_3(B)&=\left\{(\ee',\al_1) \in \RR^9 \midd
        \begin{aligned}
          &\e_1, \dots, \e_7\ge 1,\\ 
          &\sbase 4 3 2 3 2 2 \le B,\ \sbase 6 5 3 4 2 4 \ge B
        \end{aligned}
      \right\}\\
      \R_4(B)&=\left\{(\ee',\al_1) \in \RR^9 \midd
        \begin{aligned}
          &\e_1, \dots, \e_6 \ge 1,\ \e_7 \ge 0,\\ 
          &\sbase 4 3 2 3 2 2 \le B,\ \sbase 6 5 3 4 2 4 \ge B
        \end{aligned}
      \right\}\\
    \end{split}
  \end{equation*}
  For $1\le i \le 4$, we will show that $|V^{(i)}(B)-V^{(i-1)}(B)| \ll
  B(\log B)^5$. Since $V^{(0)}(B)=V_0(B)$ and $V^{(4)}(B) = \int_{(\ee',\al_1)
    \in \R'(B)} (\e_2\e_6^2)^{-1} \dd \ee' \dd \al_1$, this is enough to
  establish the lemma.

  It turns out that in applying 
    \cite[Lemma~5.1]{manin_auxiliary}
    to obtain
  \eqref{eq:estimate_V1}--\eqref{eq:estimate_V3}, only the inequality
  $h(\ee',\al_1;B)\le 1$ is used in the definition of $\R(B)$. Hence the same
  bounds hold if we replace $\R(B)$ by $\R'(B)$ in the definitions of $V_i^a,
  V_i^b$.

  For $i=1$,  the inequality $\sbase 4 3 2 3 2 2 \le B$
  follows from $h(\ee',\al_1;B)\le 1$ and $\e_7 \ge 1$. Therefore,
  $V^{(0)}(B)=V^{(1)}(B)$. 

  For $i=2$ we use a variation of (\ref{eq:estimate_V2a}) for the integration
  over $\al_1, \e_7$. Then integrating over 
   $|\e_8| \ge 1$
    and $\sbase 6 5 3 4 2 4 < B$ and $1 \le \e_1, \dots, \e_5 \le B$, we deduce
  that
  \begin{equation*}
    \begin{split}
      |V^{(2)}(B)-V^{(1)}(B)| &\ll \int_{\ee, \e_8} \frac{B^{5/6}}{\sbase
        0{1/6}{1/2}{1/3}{2/3}{1/3}|\e_8|^{7/6}} \dd \ee \dd \e_8\\
      &\ll \int_{\e_1, \dots, \e_5} \frac{B}{\sbase 111110} \dd \e_1
      \dots\dd\e_5\\
      &\ll B(\log B)^5.
    \end{split}
  \end{equation*}

  For $i=3$ we begin by using 
  (\ref{eq:estimate_V1}) for the integration over $\al_1$. Then
  integrating over $|\e_8| <
  1$, $\e_7 \le B/(\sbase 432322)$, $\sbase 653424 \ge B$ and $1 \le \e_1,
  \dots, \e_5 \le B$, we deduce that  \begin{equation*}
    \begin{split}
      |V^{(3)}(B)-V^{(2)}(B)| &\ll \int_{\ee, \e_7,\e_8}
      \frac{B^{1/2}}{\e_2^{1/2}\e_3^{1/2}\e_6|\e_8|^{1/2}} \dd \ee \dd \e_7
      \dd \e_8\\
      &\ll \int_{\ee} \frac{B^{3/2}}{\sbase 4{7/2}{5/2}323} \dd \ee\\
      &\ll B(\log B)^5.
    \end{split}
  \end{equation*}

  Finally, for $i=4$ we use 
  (\ref{eq:estimate_V2b}) for the integration over $\al_2,\e_8$. Then,
  integrating over $0
  \le \e_7 < 1$, $\sbase 432322 \le B$ and $1 \le \e_1, \dots, \e_5
  \le B$ we obtain
  \begin{equation*}
    \begin{split}
      |V^{(4)}(B)-V^{(3)}(B)| &\ll \int_{\ee, \e_7} \frac{B^{3/4}}{\sbase
        0{1/4}{1/2}{1/4}{1/2}{1/2} \e_7^{3/4}} \dd \ee \dd \e_7\\
      &\ll \int_{\e_1, \dots, \e_5} \frac{B}{\sbase 111110} \dd \e_1 \dots \dd
      \e_5\\
      &\ll B(\log B)^5.
    \end{split}
  \end{equation*}
This completes the proof of the lemma. 
\end{proof}

Substituting \[x_0=\frac{\sbase 4 3 2 3 2 2 \e_7}{B},\quad
x_1 = \frac{\sbase 3 2 2 2 1 1 \al_1}{B},\quad x_2 = \frac{\sbase 2 1 1 2 2 0
  \e_7^2\e_8}{B}\] into $\omega_\infty$, for fixed $\ee \in
\RRp^6$, we obtain 
\[\int_{(\e_7,\e_8,\al_1) \in \R_2'(\ee;B)} \frac{1}{\e_2\e_6^2} \dd
\e_7 \dd \e_8 \dd \al_1 = \frac{\omega_\infty B}{\e_1\cdots \e_6}.\] Finally,
by substituting $x_i = \frac{\log \e_1}{\log B}$ for $1\le i \le 6$ into
(\ref{eq:alpha_volume}), written as an integral, we deduce that \[\alpha(\tS)
(\log B)^6 = \int_{\ee \in \R'_1(B)} \frac{1}{\e_1\cdots\e_6} \dd \ee.\] This
completes the proof of the theorem.

\end{document}